\documentclass[12pt,etds]{amsart}
\usepackage{amsmath}
\usepackage{amssymb}
\usepackage{amsfonts}
\usepackage{amsthm}
\usepackage{enumerate}
\usepackage{tikz}
\usepackage{url}
\usetikzlibrary{matrix}

\usepackage{pb-diagram}
\usepackage[all,cmtip]{xy}

\newtheorem{theorem}{Theorem}[section]
\newtheorem{lemma}[theorem]{Lemma}
\newtheorem{prop}[theorem]{Proposition}
\newtheorem{cor}[theorem]{Corollary}
\theoremstyle{remark}

\newtheorem{remark}[theorem]{Remark}

\def\N{{\mathbb N}}

\def\C{{\mathbb C}}

\def\TT{{\mathbb T}}

\def\Z{{\mathbb Z}}

\def\A{{\mathcal{A}}}

\def\E{{\mathcal{E}}}
\def\U{{\mathcal{U}}}
\def\K{{\mathcal{K}}}
\def\H{{\mathcal{H}}}
\def\T{{\mathcal{T}}}
\def\I{{\mathcal{I}}}
\def\J{{\mathcal{J}}}
\def\L{{\mathcal{L}}}

\def\M{{\mathcal{M}}}
\def\c{{\bf c}}
\newcommand{\clsp}{\overline{\operatorname{span}}}

\newcommand{\id}{\operatorname{id}}

\newcommand{\piso}{\operatorname{piso}}
\newcommand{\iso}{\operatorname{iso}}

\newcommand{\Prim}{\operatorname{Prim}}

\newcommand{\clu}{\widehat{u}}
\newcommand{\whitesquare}{\hfill $\whitesquare$\newline\vspace{0.4cm}}

\makeatletter
\@namedef{subjclassname@2020}{\textup{2020} Mathematics Subject Classification}
\makeatother

\numberwithin{equation}{section}

\begin{document}

\title[The ideal structure of the semigroup crossed product]
{The primitive ideal space of the partial-isometric crossed product corresponding to an action induced by the multiplication on $p$-adic integers}

\author[Saeid Zahmatkesh]{Saeid Zahmatkesh}
\address{Mathematics and Statistics with Applications (MaSA), Department of Mathematics, Faculty of Science, King Mongkut's University of Technology Thonburi, Bangkok 10140, THAILAND}
\email{saeid.zk09@gmail.com, saeid.kom@kmutt.ac.th}



\subjclass[2020]{Primary 46L55}
\keywords{$C^{*}$-algebra, crossed product, endomorphism, partial-isometry, $p$-adic integers, primitive ideal}

\begin{abstract}
For an odd prime $p$, we consider an action $\alpha$ of the semigroup $\N^{2}$ on the algebra $C(\Z_p)$ induced by the multiplication on
the compact topological ring of $p$-adic integers $\Z_p$. We then identify all primitive ideals of the partial-isometric crossed product
$C(\Z_{p})\rtimes_{\alpha}^{\piso} \N^{2}$ and describe the hull-kernel closures of the subsets of its primitive ideal space.
\end{abstract}
\maketitle

\section{Introduction}
\label{sec:intro}
Let $\N^{2}$ be the positive cone of the abelian lattice-ordered group $\Z^{2}$. Let $p$ be an odd prime and $\Z_{p}$ the compact topological ring of $p$-adic integers.
We pick another odd prime $q$ and consider the action of $\N^{2}$ on $\Z$ defined by
$$(m,n)\cdot x:=p^{m}q^{n}x.$$
This action indeed extends to an action of $\N^{2}$ on $\Z_p$, and therefore, we obtain a semigroup dynamical system
$(C(\Z_p),\N^{2},\alpha)$, where
\begin{itemize}
\item[$\bullet$] $C(\Z_p)$ is the $C^*$-algebra of all continuous complex-valued functions on $\Z_p$; and
\item[$\bullet$] $\alpha$ is an action of $\N^{2}$ on $C(\Z_p)$ by endomorphisms such that
\[
\alpha_{(m,n)}(f)(x)=
   \begin{cases}
      f(p^{-m}q^{-n}x) &\textrm{if}\empty\ \text{$x\in p^{m}q^{n}\mathbb{Z}_p$},\\
      0 &\textrm{otherwise}
   \end{cases}
\]
\end{itemize}
for all $(m,n)\in \N^{2}$, $f\in C(\Z_p)$, and $x\in \Z_p$. Note that the action $\alpha$ is not unital, that is, $\alpha_{(m,n)}(1)\neq 1$.
Now, our goal in the present work is to study the primitive ideal space of the partial-isometric crossed product $C(\Z_{p})\rtimes_{\alpha}^{\piso} \N^{2}$ of the system $(C(\Z_p),\N^{2},\alpha)$.
To do so, we first consider the (extendibly) invariant ideal $C_{0}(\Z_{p}\setminus\{0\})$ of $C(\Z_{p})$ which gives rise to ideal $$C_{0}(\Z_{p}\setminus\{0\})\rtimes_{\alpha}^{\piso} \N^{2}$$ and the
quotient algebra $$C(\Z_{p})\rtimes_{\alpha}^{\piso} \N^{2}/C_{0}(\Z_{p}\setminus\{0\})\rtimes_{\alpha}^{\piso} \N^{2}\simeq \T(\Z^{2}).$$ Then, all primitive ideals are determined, derived from
$\Prim(C_{0}(\Z_{p}\setminus\{0\})\rtimes_{\alpha}^{\piso} \N^{2})$ and $\Prim (\T(\Z^{2}))$. Finally, by identifying spanning elements for the ideals coming from
$\Prim(C_{0}(\Z_{p}\setminus\{0\})\rtimes_{\alpha}^{\piso} \N^{2})$, we compute the hull-kernel closures of the subsets of $\Prim(C(\Z_{p})\rtimes_{\alpha}^{\piso} \N^{2})$.

The dynamical system $(C(\Z_p),\N^{2},\alpha)$ along with the ideal $C_{0}(\Z_{p}\setminus\{0\})$ appears, first, in \cite{LPR}, where an isometric crossed product
$C(\Z_{p}\times \Z_q)\rtimes^{\iso} \N^{2}$ of the algebra $C(\Z_p\times \Z_q)\simeq C(\Z_{p})\otimes C(\Z_{q})$, which is
similar to the one that models the Hecke $C^{*}$-algebra of Bost and Connes, is studied. Following \cite{LPR}, it then appears in \cite[\S7]{SZ3} which discusses the partial-isometric crossed
$C(\Z_{p}\times \Z_q)\rtimes^{\piso} \N^{2}$, for which, a composition series of ideals is obtained (see \cite[Theorem 7.1]{SZ3}). Since the algebras
$C_{0}(\Z_{p}\setminus\{0\})\rtimes_{\alpha}^{\piso} \N^{2}$ and $\T(\Z^{2})$
both play role in the subquotients of that composition series of ideals, we strongly believe that the present work provides very useful information for
further investigations on the ideal structure of $C(\Z_{p}\times \Z_q)\rtimes^{\piso} \N^{2}$, where the study of its primitive ideal space might be considered in future works.

We begin with a preliminary section which recalls the theory of the partial-isometric crossed products in brief. In section \ref{sec:set-prim}, we determine all primitive ideals of
$C(\Z_{p})\rtimes_{\alpha}^{\piso} \N^{2}$ coming from the ideal $C_{0}(\Z_{p}\setminus\{0\})\rtimes_{\alpha}^{\piso} \N^{2}$ and the corresponding quotient algebra $\T(\Z^{2})$.
In section \ref{sec:closures}, the final section, we identify the spanning elements of the ideals coming from $\Prim(C_{0}(\Z_{p}\setminus\{0\})\rtimes_{\alpha}^{\piso} \N^{2})$ and describe
the hull-kernel closures of the subsets of $\Prim(C(\Z_{p})\rtimes_{\alpha}^{\piso} \N^{2})$.

\section{Preliminaries}
\label{sec:pre}

Let $S$ be the positive cone of an abelian lattice-ordered group $G$, and $A$ a $C^{*}$-algebra (not necessarily unital). Suppose that
$\alpha$ is an action of the semigroup $S$ on $A$ by (extendible) endomorphisms. The trio $(A,S,\alpha)$ is called a
\emph{(semigroup) dynamical system}.
A \emph{Nica-covariant partial-isometric representation} of the dynamical system $(A,S,\alpha)$ is a pair $(\pi,V)$ consisting of
a non-degenerate representation $\pi$ of $A$ on a Hilbert space $H$ and a representation $V$ of $S$ on $H$ by partial isometries,
such that
$$\pi(\alpha_{s}(a))=V_{s}\pi(a)V_{s}^{*}\ \ \textrm{and}\ \ V_{s}^{*}V_{s}\pi(a)=\pi(a)V_{s}^{*}V_{s},$$
and
$$V_{r}^{*}V_{r}V_{s}^{*}V_{s}=V_{r\vee s}^{*}V_{r\vee s}\ \ \ (\textrm{\emph {Nica-covariance}})$$
for all $a\in A$ and $r,s\in S$, where $r\vee s$ denotes the supremum of $r$ and $s$. There is a $C^{*}$-algebra, denoted by
$A\rtimes_{\alpha}^{\piso} S$, corresponding to the system $(A,S,\alpha)$ in which the action $\alpha$ is decoded by partial-isometries. It is universal for the Nica-covariant partial-isometric representations of $(A,S,\alpha)$, that is, there is a bijection between the Nica-covariant partial-isometric representations of $(A,S,\alpha)$ and the non-degenerate representations of $A\rtimes_{\alpha}^{\piso} S$.
The non-degenerate representation of $A\rtimes_{\alpha}^{\piso} S$ corresponding to a covariant pair $(\pi,V)$ is denoted by
$\pi\rtimes^{\piso} V$, or simply $\pi\rtimes V$. The algebra $A\rtimes_{\alpha}^{\piso} S$ is called the \emph{partial-isometric crossed product}
of the system $(A,S,\alpha)$, which is generated by a universal covariant pair $(i_{A},i_{S})$ of $(A,S,\alpha)$ into the multiplier algebra $\M(A\rtimes_{\alpha}^{\piso} S)$. In fact, it is spanned by the elements
$$\{i_{S}(r)^{*}i_{A}(a)i_{S}(s): a\in A\ \textrm{and}\ r,s\in S\}.$$
Note that we have
$$\pi\rtimes V(i_{A}(a))=\pi(a)\ \ \textrm{and}\ \ \overline{\pi\rtimes V}(i_{S}(s))=V_{s}.$$
For more on the theory of the partial-isometric crossed products, readers are referred to \cite{Adji-Abbas, AZ,AZ2, Fowler, LZ, LR, SZ, SZ2,SZ3, SZ4, SZ5}. In particular, see in
\cite{SZ3} that the partial-isometric crossed products are actually defined for a larger class of semigroups, and by \cite{SZ2}, when $S$ is the positive cone of an abelian lattice-ordered group $G$,
$A\rtimes_{\alpha}^{\piso} S$ sits in a classical crossed product by the group $G$ as a full corner.

To see about the isometric crossed product $A\rtimes_{\alpha}^{\iso} S$ of $(A,S,\alpha)$ (another kind of semigroup crossed product corresponding to the system) that appears in
the present work, readers may refer to works such as \cite{ALNR, Fowler, Larsen, Stacey}.

\section{The primitive ideal space of $C(\Z_{p})\rtimes_{\alpha}^{\piso} \N^{2}$ as a set}
\label{sec:set-prim}

Consider the partial-isometric crossed product $(C(\Z_{p})\rtimes_{\alpha}^{\piso} \N^{2}, i_{C(\Z_{p})}, i_{\N^{2}})$ of the dynamical
system $(C(\Z_p),\N^{2},\alpha)$ (see \S1). Consider the extendibly $\alpha$-invariant ideal $C_{0}(\Z_{p}\setminus\{0\})$ of
$C(\Z_p)$ as in \cite{LPR, SZ3}, corresponding to which, we have the following
short exact sequence
\begin{align}
\label{ext-seq}
\xymatrix{
0 \ar[r] & C_{0}(\mathbb{Z}_{p}\setminus\{0\})\rtimes_{\alpha}^{\textrm{piso}} \mathbb{N}^{2} \ar[r] &
C(\mathbb{Z}_{p})\rtimes_{\alpha}^{\textrm{piso}} \mathbb{N}^{2} \ar[r]^-{\varphi} & \mathcal{T}(\mathbb{Z}^{2}) \ar[r] & 0
}
\end{align}
of $C^*$-algebras by \cite[Theorem 4.8]{SZ3}. Note that it is known that
$$[C(\Z_{p})/C_{0}(\Z_{p}\setminus\{0\})]\rtimes_{\tilde{\alpha}}^{\piso} \N^{2} \simeq \C\rtimes_{\id}^{\piso} \N^{2}$$
is isomorphic to Toeplitz algebra $\T(\Z^{2})$ (for example, see\cite{SZ2}). Thus, the primitive ideals of the algebra $C(\mathbb{Z}_{p})\rtimes_{\alpha}^{\textrm{piso}} \mathbb{N}^{2}$ are derived from the disjoint sets
$$\Prim(C_{0}(\mathbb{Z}_{p}\setminus\{0\})\rtimes_{\alpha}^{\textrm{piso}} \mathbb{N}^{2})\ \ \textrm{and}
\ \ \Prim(\mathcal{T}(\mathbb{Z}^{2})).$$
To identify the ideals derived from the open set $\Prim(C_{0}(\mathbb{Z}_{p}\setminus\{0\})\rtimes_{\alpha}^{\textrm{piso}} \mathbb{N}^{2})$, let $\U_{p}$ denote the compact group of the multiplicatively invertible elements of $\Z_{p}$, and $\c_{0}$ and $\c$ the algebras $C_{0}(\N)$ and $C(\N\cup\{\infty\})$, respectively.
By \cite[Lemma 2.4]{LPR}, there is the following isomorphism
$$C_{0}(\mathbb{Z}_{p}\setminus\{0\})\simeq C_{0}(\N\times \U_{p})\simeq \c_{0}\otimes C(\U_{p})$$
which splits the action $\alpha$ as a tensor product action $\tau\otimes \beta$, where $\tau$ is the action of $\N$ on $\c_{0}$ given by forward
shifts, and $\beta$ the action of $\N$ on the algebra $C(\U_{p})$ by automorphisms such that
$\beta_{n}(g)(x)=g(q^{-n}x)$ for all $g\in C(\U_{p})$ and $x\in \U_{p}$. Therefore, as it was discussed in \cite{SZ3}, the crossed product $$C_{0}(\mathbb{Z}_{p}\setminus\{0\})\rtimes_{\alpha}^{\textrm{piso}} \mathbb{N}^{2}\simeq
(\c_{0}\otimes C(\U_{p}))\rtimes_{\tau\otimes \beta}^{\piso} (\N\times \N)$$ is decomposed as the maximal tensor product
\begin{align}
\label{maxtensor}
(\c_{0}\rtimes_{\tau}^{\textrm{piso}} \N)\otimes_{\max} (C(\U_{p})\rtimes_{\beta}^{\piso} \N).
\end{align}
The crossed product $\c_{0}\rtimes_{\tau}^{\textrm{piso}} \N$ is a full corner in the algebra $\K(\ell^{2}(\N))\otimes \c$ of
compact operators (see \cite[Example 4.3]{AZ}). Let us denote it by $\A$ for convenience. Note that since the algebra
$\K(\ell^{2}(\N))\otimes \c$ is CCR (liminal), so is $\A$. It follows that $\A$ is nuclear, and therefore, the maximal tensor product in (\ref{maxtensor}) is just the minimal (spatial) one. Thus, we simply write
\begin{align}
\label{tensor}
C_{0}(\mathbb{Z}_{p}\setminus\{0\})\rtimes_{\alpha}^{\textrm{piso}} \mathbb{N}^{2}\simeq \A\otimes (C(\U_{p})\rtimes_{\beta}^{\piso} \N).
\end{align}
Now, it follows by \cite[Theorem B.45]{RW} that
\begin{align}
\label{prim1}
\Prim(C_{0}(\mathbb{Z}_{p}\setminus\{0\})\rtimes_{\alpha}^{\textrm{piso}} \mathbb{N}^{2})
\simeq\Prim \A\times \Prim(C(\U_{p})\rtimes_{\beta}^{\piso} \N),
\end{align}
where $$\Prim \A\simeq \Prim(\K(\ell^{2}(\N))\otimes \c)\simeq \Prim \c\simeq  \N\cup\{\infty\}.$$
For $\Prim(C(\U_{p})\rtimes_{\beta}^{\piso} \N)$, let $\Gamma_{q}$ denote the closed subgroup of $\U_{p}$ generated by
$q$. Since the action $\beta$ is given by automorphisms and $\Z$ acts freely on $\U_{p}$, \cite[Theorem 3.11]{LZ}
identifies $\Prim(C(\U_{p})\rtimes_{\beta}^{\piso} \N)$ with the disjoint union
$$\U_{p}\sqcup \Prim(C(\U_{p})\rtimes_{\beta} \Z),$$
and determines the open sets, precisely. However, $\Prim(C(\U_{p})\rtimes_{\beta} \Z)$ is indeed homeomorphic to the quotient (group)
$\U_{p}/\Gamma_{q}$, which is a finite set equipped with the discrete topology (see \cite{LPR}). Consequently,
\begin{align}
\label{prim2}
\Prim(C_{0}(\mathbb{Z}_{p}\setminus\{0\})\rtimes_{\alpha}^{\textrm{piso}} \mathbb{N}^{2})
\simeq (\N\cup\{\infty\}) \times (\U_{p}\sqcup \U_{p}/\Gamma_{q}).
\end{align}

For the ideals lifted from the closed set $\Prim \T(\Z^{2})$, since $\T(\Z^{2})\simeq \T(\Z)\otimes \T(\Z)$, again by
\cite[Theorem B.45]{RW}, we have
$$\Prim \T(\Z^{2})=\Prim \T(\Z) \times \Prim \T(\Z),$$
where $\Prim \T(\Z)$ is known. We use \cite[Example 3.9]{LZ}, in which, $\Prim \T(\Z)$ is described as the disjoint union $\{0\}\sqcup \TT$.
Therefore, one may determines the open sets in $\Prim \T(\Z^{2})$ via the product topology. However, since
$\T(\Z^{2})\simeq \C\rtimes_{\id}^{\piso} \mathbb{N}^{2}$, we apply \cite[Theorem 4.2]{SZ5}. It follows that
\begin{eqnarray*}
\begin{array}{rcl}
\Prim \T(\Z^{2})&=&\big(\{0\}\sqcup \TT\big)\times \big(\{0\}\sqcup \TT\big)\\
&=&\{(0,0)\}\sqcup \big(\TT\times \{0\}\big) \sqcup \big(\{0\}\times \TT\big) \sqcup \TT^{2}.
\end{array}
\end{eqnarray*}
Let us denote $\{(0,0)\}$ simply by $\{0\}$. Note that $\TT\times \{0\}$ and $\{0\}\times \TT$ are both homeomorphic to $\TT$.
However, the primitive ideals that they induce (parameterize) are different (see Lemma \ref{lem2}). So, to distinguish these two sets (and
the corresponding primitive ideals), they are denoted by $\dot{\TT}$ and $\ddot{\TT}$, respectively, for convenience. Consequently,
\begin{align}
\label{prim3}
\Prim \T(\Z^{2})=\{0\} \sqcup \dot{\TT} \sqcup \ddot{\TT} \sqcup \TT^{2},
\end{align}
in which, the open sets are precisely determined by \cite[Theorem 4.2]{SZ5}.

Thus, it follows by (\ref{prim2}) and (\ref{prim3}) that $\Prim(C(\Z_{p})\rtimes_{\alpha}^{\piso} \N^{2})$, as
a set, can be identified with the following disjoint union of eight sets:
\begin{eqnarray*}
\begin{array}{l}
(\N\cup\{\infty\}) \times (\U_{p}\sqcup \U_{p}/\Gamma_{q})\sqcup\{0\}\sqcup \dot{\TT}\sqcup\ddot{\TT} \sqcup \TT^{2}\\
=(\N\times \U_{p}) \sqcup (\N\times (\U_{p}/\Gamma_{q})) \sqcup \U_{p}\sqcup (\U_{p}/\Gamma_{q})
\sqcup\{0\}\sqcup \dot{\TT}\sqcup\ddot{\TT} \sqcup \TT^{2},
\end{array}
\end{eqnarray*}
where $\U_{p}$ and $(\U_{p}/\Gamma_{q})$ simply correspond to the sets $\{\infty\}\times \U_{p}$ and $\{\infty\}\times(\U_{p}/\Gamma_{q})$,
respectively. We are now ready to identify all primitive ideals starting with the ones lifted from the open set $\Prim(C(\Z_{p}\setminus\{0\})\rtimes_{\alpha}^{\piso} \N^{2})$.

For the following lemma, let $\{e_{r}:r\in \N\}$ denote the usual orthonormal basis of the Hilbert space $\ell^{2}(\N)$.
\begin{lemma}
\label{lem1}
Let $n\in\N$, $u\in\U_{p}$, and $\H_{n}$ be the Hilbert space $\C^{n+1}=\oplus_{i=0}^{n}\C$. Suppose that $T:\N\rightarrow B(\ell^{2}(\N))$ is
the representation of $\N$ on $\ell^{2}(\N)$ by isometries such that $T_{s}(e_{r})=e_{r+s}$, $P_{n}$ the projection $1-T_{n+1}T_{n+1}^{*}$ of $\ell^{2}(\N)$ on $\H_{n}$ and $\lambda:\Z\rightarrow B(\ell^{2}(\Z))$ the left regular representation. Let $\widehat{u}$ denote the equivalence class
$u\Gamma_{q}$ of $u$ in $\U_{p}/\Gamma_{q}$.
Define the following pairs of maps corresponding to $(n,u)$, $(n,\clu)$, $u$, and $\clu$, respectively:
\begin{itemize}
\item[(i)] $\rho:C(\Z_{p})\rightarrow B(\H_{n}\otimes \ell^{2}(\N))$ and $V\otimes T^{*}:\N^{2}\rightarrow B(\H_{n}\otimes \ell^{2}(\N))$
by
$$(\rho(f)\xi)(r,s)=f(p^{n-r}q^{-s}u)\xi(r,s)\ \textrm{for every}\ 0\leq r\leq n\ \textrm{and}\ s\in\N,$$
and
$$(r,s)\mapsto V_{r}\otimes T_{s}^{*}=(P_{n}T_{r}^{*}P_{n})\otimes T_{s}^{*}\ \textrm{for all}\ r,s\in \N,$$
respectively;

\item[(ii)] $\rho:C(\Z_{p})\rightarrow B(\H_{n}\otimes \ell^{2}(\Z))$ and $V\otimes \lambda:\N^{2}\rightarrow B(\H_{n}\otimes \ell^{2}(\Z))$
by
$$(\rho(f)\xi)(r,s)=f(p^{n-r}q^{s}u)\xi(r,s)\ \textrm{for every}\ 0\leq r\leq n\ \textrm{and}\ s\in\Z,$$
and
$$(r,s)\mapsto V_{r}\otimes \lambda_{s}=(P_{n}T_{r}^{*}P_{n})\otimes \lambda_{s}\ \textrm{for all}\ r,s\in \N,$$
respectively;

\item[(iii)] $\rho:C(\Z_{p})\rightarrow B(\ell^{2}(\N)\otimes \ell^{2}(\N))$ and
$T\otimes T^{*}:\N^{2}\rightarrow B(\ell^{2}(\N)\otimes \ell^{2}(\N))$
by
$$(\rho(f)\xi)(r,s)=f(p^{r}q^{-s}u)\xi(r,s)$$
and
$$(r,s)\mapsto T_{r}\otimes T_{s}^{*}\ \textrm{for all}\ r,s\in \N,$$
respectively; and

\item[(iv)] $\rho:C(\Z_{p})\rightarrow B(\ell^{2}(\N)\otimes \ell^{2}(\Z))$
and $T\otimes \lambda:\N^{2}\rightarrow B(\ell^{2}(\N)\otimes \ell^{2}(\Z))$
by
$$(\rho(f)\xi)(r,s)=f(p^{r}q^{s}u)\xi(r,s)\ \textrm{for all}\ r\in \N\ \textrm{and}\ s\in\Z,$$
and
$$(r,s)\mapsto T_{r}\otimes \lambda_{s}\ \textrm{for all}\ r,s\in \N,$$
respectively.
\end{itemize}
Each pair in above is a Nica-covariant partial-isometric representation of the system $(C(\Z_{p}),\N^{2},\alpha)$.
\end{lemma}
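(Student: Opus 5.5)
The plan is to verify the defining conditions of a Nica-covariant partial-isometric representation directly on the standard orthonormal bases: $\{e_r\otimes e_s\}$ of $\H_n\otimes\ell^2(\N)$, $\H_n\otimes\ell^2(\Z)$, $\ell^2(\N)\otimes\ell^2(\N)$ or $\ell^2(\N)\otimes\ell^2(\Z)$. In every case $\rho(f)$ is a diagonal multiplication operator. So I will check the covariance relation by evaluating both sides on $e_r\otimes e_s$ and comparing the scalar coefficients.

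\textbf{Step 1: $\rho$ is a non-degenerate representation.} The points at which $f$ is evaluated lie in $\Z_p$. The exponent $n-r\geq 0$ in (i) and (ii), and $r\ge 0$ in (iii) and (iv). Also $q$ and $u$ are units of $\Z_p$, so negative powers of $q$ are allowed. Hence each $\rho$ is a direct sum of point evaluations, i.e. a $*$-homomorphism, and $\rho(1)=1$ gives non-degeneracy.

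\textbf{Step 2: the second component is a representation of $\N^2$ by partial isometries.} On $\H_n$, the operator $V_r=P_nT_r^*P_n$ is the truncated backward shift: $V_re_i=e_{i-r}$ if $r\le i\le n$, and $V_re_i=0$ otherwise. From this, $V_rV_{r'}=V_{r+r'}$, and each $V_r$ is a partial isometry. The maps $s\mapsto T_s^*$, $T_s$ and $\lambda_s$ are semigroup homomorphisms into coisometries, isometries and unitaries respectively. Since the two factors act on different tensor legs, $(r,s)\mapsto V_r\otimes W_s$ (with $W$ one of $T^*,\lambda$, and similarly $T\otimes W$) is multiplicative. A tensor product of partial isometries is a partial isometry.

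\textbf{Step 3: Nica covariance and the commutation condition.} In each factor the initial projections are diagonal in the basis:
\begin{itemize}
\item $V_r^*V_r$ is the projection onto $\operatorname{span}\{e_i:r\le i\le n\}$;
\item $T_sT_s^*$ is the projection onto $\operatorname{span}\{e_i:i\ge s\}$;
\item $T_r^*T_r=1$ and $\lambda_s^*\lambda_s=1$.
\end{itemize}
In each case these projections decrease as the index grows. Therefore the product of the initial projections at $r$ and $r'$ equals the one at $\max(r,r')$ in each leg. Since $(r,s)\vee(r',s')=(\max(r,r'),\max(s,s'))$, Nica covariance follows. All $U^*U$ are diagonal, so they commute with the diagonal operators $\rho(f)$.

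\textbf{Step 4: the covariance $\rho(\alpha_{(m,k)}(f))=U\rho(f)U^*$.} This is the main computation. In case (i), write $U=V_m\otimes T_k^*$. Then $U^*(e_r\otimes e_s)=e_{r+m}\otimes e_{s+k}$ if $r+m\le n$, and $0$ otherwise. Hence
$$U\rho(f)U^*(e_r\otimes e_s)=f(p^{n-r-m}q^{-s-k}u)\,e_r\otimes e_s \quad\text{when } r+m\le n,$$
and $0$ otherwise. On the other side, $x=p^{n-r}q^{-s}u$ has $p$-adic valuation exactly $n-r$, because $q$ and $u$ are units. So $x\in p^mq^k\Z_p$ iff $n-r\ge m$, and in that case $\alpha_{(m,k)}(f)(x)=f(p^{-m}q^{-k}x)$, which is the same coefficient.

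Cases (ii)–(iv) follow the same pattern:
\begin{itemize}
\item in (ii), $\lambda_k^*$ shifts $s$ to $s-k$, which matches $q^{s}\mapsto q^{s-k}$;
\item in (iii) and (iv), $U^*=T_m^*\otimes(\cdot)$ kills $e_r$ exactly when $r<m$, which matches the valuation condition $p^rq^{\pm s}u\in p^m\Z_p$ iff $r\ge m$.
\end{itemize}

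The only real obstacle is this bookkeeping. One must keep track of which leg shifts in which direction, and confirm in each case that the vanishing region of $U^*$ coincides exactly with the complement of $p^mq^k\Z_p$. This rests on the fact that $q$ and $u$ are $p$-adic units.
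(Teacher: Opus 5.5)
Your proposal is correct and follows the paper's approach: the paper's proof only asserts that direct calculations show each pair is a Nica-covariant partial-isometric representation, and your basis-vector checks (diagonal $\rho(f)$, initial projections decreasing in the index, and matching the support of $U^{*}$ against the valuation $v_p(p^{n-r}q^{\mp s}u)=n-r$) supply exactly those omitted details. The one implicit input is $q\neq p$, which makes $q$ a unit in $\Z_p$; state it explicitly, since both $p^{m}q^{k}\Z_p=p^{m}\Z_p$ and the validity of the negative powers $q^{-s}$ rest on it.
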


\begin{proof}
Calculations on the spanning elements show that each pair is indeed a Nica-covariant partial-isometric representation of
$(C(\Z_{p}),\N^{2},\alpha)$. However, we skip them here.
\end{proof}
Let
$$\Pi_{(n,u)}, \Pi_{(n,\widehat{u})}, \Pi_{u}\ \textrm{and}\ \Pi_{\clu}$$
denote the (unital) representations of $C(\Z_{p})\rtimes_{\alpha}^{\piso} \N^{2}$ corresponding to the covariant pairs (i)-(iv)
in Lemma \ref{lem1}, respectively. These representations are actually irreducible. To show this, it is enough to see that their restrictions
to the ideal $C_{0}(\Z_{p}\setminus\{0\})\rtimes_{\alpha}^{\piso} \N^{2}\simeq \A\otimes C(\U_{p})\rtimes_{\beta}^{\piso} \N$ are nonzero
and irreducible. In fact, we will see that these restrictions split as tensor product representations $\pi_{1}\otimes \pi_{2}$, where
$\pi_{1}$ and $\pi_{2}$ are known (nonzero) irreducible representations of $\A$ and $C(\U_{p})\rtimes_{\beta}^{\piso} \N$, respectively.
Therefore, $\pi_{1}\otimes \pi_{2}$ is irreducible (see\cite[Theorem B.45]{RW}).
\begin{prop}
\label{pi1&pi2}
Let $n\in\N$ and $u\in\U_{p}$, and $V:\N\rightarrow B(\H_{n})$, $T:\N\rightarrow B(\ell^{2}(\N))$, $T^{*}:\N\rightarrow B(\ell^{2}(\N))$, and $\lambda:\N\rightarrow B(\ell^{2}(\Z))$ be the representations of $\N$ by partial-isometries as in Lemma \ref{lem1}. Consider the following maps:
\begin{itemize}
\item[(a)] $\phi_{n}:\c_{0}\rightarrow B(\H_{n})$ defined by
$$(\phi_{n}(g)\xi)(r)=g(n-r)\xi(r)\ \textrm{for every}\ 0\leq r\leq n,\ g\in\c_{0}\ \textrm{and}\ \xi\in\H_{n};$$
\item[(b)] $M:\c_{0}\rightarrow B(\ell^{2}(\N))$ defined by
$$(M(g)\xi)(r)=g(r)\xi(r)\ \textrm{for every}\ r\in\N,\ g\in\c_{0}\ \textrm{and}\ \xi\in\ell^{2}(\N);$$
\item[(c)] $\pi_{u}:C(\U_{p})\rightarrow B(\ell^{2}(\N))$ defined by
$$(\pi_{u}(h)\xi)(s)=h(q^{-s}u)\xi(s)\ \textrm{for every}\ s\in\N,\ h\in C(\U_{p})\ \textrm{and}\ \xi\in\ell^{2}(\N);$$ and
\item[(d)] $\pi_{\clu}:C(\U_{p})\rightarrow B(\ell^{2}(\Z))$ defined by
$$(\pi_{\clu}(h)\xi)(s)=h(q^{s}u)\xi(s)\ \textrm{for every}\ s\in\Z,\ h\in C(\U_{p})\ \textrm{and}\ \xi\in\ell^{2}(\Z).$$
\end{itemize}
Then,
\begin{itemize}
\item[(i)] the pairs $(\phi_{n},V)$ and $(M,T)$ are partial-isometric covariant representations of the system $(\c_{0},\N,\tau)$ such that
the corresponding (nondegenerate) representations $\phi_{n}\rtimes V$ and $M\rtimes^{\piso} T$ of $\A=\c_{0}\rtimes_{\tau}^{\piso} \N$
are irreducible;
\item[(ii)] the pairs $(\pi_{u},T^{*})$ and $(\pi_{\clu},\lambda)$ are partial-isometric covariant representations of the system
$(C(\U_{p}),\N,\beta)$ such that corresponding (unital) representations $\pi_{u}\rtimes T^{*}$ and $\pi_{\clu}\rtimes^{\piso} \lambda$ of $C(\U_{p})\rtimes_{\beta}^{\piso} \N$ are irreducible
\end{itemize}

\end{prop}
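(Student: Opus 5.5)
The plan is to split the argument into two parts: first check the covariance relations directly, then prove irreducibility by showing that the commutant of the generated algebra consists of scalars.

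\emph{Covariance.} For $(\phi_n,V)$ we have $V_r=P_nT_r^*P_n$. This acts on $\H_n$ by $e_j\mapsto e_{j-r}$ when $j\ge r$ and $e_j\mapsto 0$ otherwise. So $V$ is a power-semigroup of partial isometries (the truncated backward shift), $V_r=V_1^r$. Since the semigroup is $\N$, Nica covariance is automatic: $V_r^*V_r$ is the projection onto $\operatorname{span}\{e_j:j\ge r\}$, and these projections are decreasing and commute. The identity $V_r\phi_n(g)V_r^*=\phi_n(\tau_r(g))$, with $\tau_r(g)(k)=g(k-r)$ for $k\ge r$ and $0$ otherwise, is checked on basis vectors. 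Both sides are diagonal. At $e_i$ the left side equals $g(n-i-r)$ when $i+r\le n$ and $0$ otherwise, and the right side is $g(n-i)$ evaluated via $\tau_r$. This needs the index convention $\phi_n(g)e_r=g(n-r)e_r$ to match the direction of the shift. The commutation of $V_r^*V_r$ with $\phi_n(g)$ holds because both are diagonal.

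The same computation handles $(M,T)$, where $T$ consists of isometries and $TMT^*$ is the forward shift of the multiplier. For $(\pi_u,T^*)$, the operator $T_s^*$ is a coisometry, so $T_s^*\pi_u(h)T_s$ multiplies $e_k$ by $h(q^{-(k+s)}u)$. This should equal $\pi_u(\beta_s h)$, where $\beta_s(h)(x)=h(q^{-s}x)$, which evaluated at $q^{-k}u$ gives the same value. For $(\pi_{\clu},\lambda)$, it is the usual covariance for a unitary with a translation-type multiplication.

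\emph{Irreducibility.} For $\phi_n\rtimes V$, take $g=\delta_{n-i}$, so that $\phi_n(g)$ is the rank-one projection onto $e_i$. Together with $V_1$, which moves $e_i$ to $e_{i-1}$, and $V_1^*$, these generate all matrix units of $M_{n+1}(\C)=B(\H_n)$. Hence the image is all of $B(\H_n)$, and the representation is irreducible. For $M\rtimes T$, the diagonal projections onto each $e_r$ together with $T$ generate all rank-one operators $e_i\otimes e_j^*$, so the image contains $\K(\ell^2(\N))$ and is irreducible.

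For $\pi_u\rtimes T^*$: since $\Z$ acts freely on $\U_p$, the points $q^{-s}u$ for $s\in\N$ are distinct. By Urysohn's lemma, $\pi_u(C(\U_p))''$ contains every diagonal projection $e_s\otimes e_s^*$. Combined with the shifts, this again gives $\K(\ell^2(\N))$ in the image, hence irreducibility. For $\pi_{\clu}\rtimes\lambda$, the multiplication operators separate the points $q^su$ for $s\in\Z$, again by freeness. Their weak closure is therefore the maximal abelian algebra $\ell^\infty(\Z)$. An operator commuting with it is diagonal, and commuting also with $\lambda_1$ forces it to be a constant diagonal, i.e.\ a scalar.

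\emph{Main obstacle.} I expect the hardest step to be the index bookkeeping in $(\phi_n,V)$, namely making $V_r=P_nT_r^*P_n$ compatible with the forward-shift action $\tau$ under the reversed coordinate $n-r$. I would check this first on basis vectors. I would also confirm non-degeneracy of $\phi_n$: it holds because $\c_0$ contains the $\delta_k$, so $\phi_n(\c_0)$ contains every diagonal projection on $\H_n$. The same argument gives non-degeneracy of $M$.
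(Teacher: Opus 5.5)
Your proposal is correct, but it takes a different route from the paper. The paper skips the covariance checks entirely. It gets irreducibility by identifying each representation with a known one:
\begin{itemize}
\item $\phi_n\rtimes V$ is the restriction of the irreducible representation $\pi_n^{*}$ of $\c\rtimes_{\tau}^{\piso}\N$ from \cite{AZ2} to the ideal $\ker\varphi_T\simeq\c_0\rtimes_\tau^{\piso}\N$;
\item $M\rtimes^{\piso}T=(M\rtimes^{\iso}T)\circ\phi$, where $M\rtimes^{\iso}T$ is an isomorphism onto $\K(\ell^{2}(\N))$;
\item $\pi_u\rtimes T^{*}$ is the representation built from $\varepsilon_u$ in \cite[Proposition 3.1]{LZ};
\item $\pi_{\clu}\rtimes^{\piso}\lambda$ is the regular representation of $C(\U_p)\rtimes_\beta\Z$ along a free orbit, composed with the quotient map.
\end{itemize}
You instead verify covariance on basis vectors. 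Your bookkeeping $V_r\phi_n(g)V_r^{*}e_i=g(n-i-r)e_i=\phi_n(\tau_r g)e_i$ for $i+r\le n$ is right. You then prove irreducibility by hand: through the matrix units $V_r^{*}\phi_n(\delta_{n-i})V_s$ and $T_r^{*}M(\delta_k)T_s$, and through a commutant argument for the two $\U_p$-pieces.

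Each route buys something different. Yours is self-contained and supplies exactly the computations the paper omits. The paper's is shorter, and it places each representation inside a known structure: an essential ideal, or a factorization through $\c_0\rtimes^{\iso}_\tau\N$ or $C(\U_p)\rtimes_\beta\Z$. That structure is what the paper later uses to write down the kernels in Remark \ref{res-ker-2} and Lemma \ref{res-ker}. Your computation gives no information about kernels.

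One point of precision. The orbit points $q^{-s}u$ are not isolated in $\clu$, so no continuous $h$ isolates one of them. Urysohn's lemma can therefore only separate pairs of points inside the commutant argument. This puts the diagonal projections in $\pi_u(C(\U_p))''$, not in $\pi_u(C(\U_p))$. That is what you wrote, and it suffices for irreducibility. Strictly, though, what follows from your argument is that $\K(\ell^{2}(\N))$ lies in the generated von Neumann algebra, not in the norm image.

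For $\pi_u\rtimes T^{*}$ you can in fact drop freeness altogether. With $W=T^{*}$, the image contains $1-W_1^{*}W_1=1-T_1T_1^{*}$, which is the projection onto $\C e_0$. Then $T_r(1-T_1T_1^{*})T_s^{*}$ is the rank-one operator sending $e_s$ to $e_r$, so the norm image contains $\K(\ell^{2}(\N))$ outright. Freeness is genuinely needed only for $\pi_{\clu}\rtimes^{\piso}\lambda$.
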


\begin{proof}
For (i), we skip the calculations which show that the pair $(\phi_{n},V)$ is covariant. For the corresponding representation
$\phi_{n}\rtimes V$, note that the algebra $\c_{0}\rtimes_{\tau}^{\piso} \N$ sits in the crossed product $\c\rtimes_{\tau}^{\piso} \N$ as
an (essential) ideal (denoted by $\ker \varphi_{T}$) (see \cite[Corollary 3.1]{Adji-Abbas} and \cite[Proposition 5.1]{AZ2}).
Now, the restriction of the irreducible representation $\pi_{n}^{*}$ of $\c\rtimes_{\tau}^{\piso} \N$ given in \cite{AZ2}
to the ideal $\ker \varphi_{T}\simeq \c_{0}\rtimes_{\tau}^{\piso} \N$ is precisely the representation $\phi_{n}\rtimes V$. Thus,
it is irreducible. However, it is not difficult to see this directly. For the pair $(M,T)$, it is well-known that it is an
isometric covariant representation of $(\c_{0},\N,\tau)$ such that corresponding (nondegenerate) representation $M\rtimes^{\iso} T$ is an
isomorphism of the isometric crossed product $\c_{0}\rtimes_{\tau}^{\iso} \N$ onto the algebra of $\K(\ell^{2}(\N))$ of compact operators.
Now, if $\phi$ the natural surjection of $\c_{0}\rtimes_{\tau}^{\piso} \N$ onto $\c_{0}\rtimes_{\tau}^{\iso} \N$, we have
$M\rtimes^{\piso} T=(M\rtimes^{\iso} T)\circ\phi$. This implies that the image of $M\rtimes^{\piso} T$ is $\K(\ell^{2}(\N))$,
and therefore, $M\rtimes^{\piso} T$ is irreducible, such that $\ker(M\rtimes^{\piso} T)=\ker \phi$ is a full corner in the algebra
$\K(\ell^{2}(\N))\otimes \c_{0}$ (see \cite[Example 4.3]{AZ}).

For (ii), since the action $\beta$ in the system $(C(\U_{p}),\N,\beta)$ is given by automorphisms, we can apply the results of \cite{LZ}.
For every $u\in \U_{p}$, let $\varepsilon_{u}:C(\U_{p})\rightarrow \C$ be the evaluation map. By \cite[Proposition 3.1]{LZ}, it gives rise
to a partial-isometric covariant representation $(\pi,W)$ of $(C(\U_{p}),\N,\beta)$ on the Hilbert space $\ell^{2}(\N)$ such that
corresponding (unital) representation $\pi\rtimes W$ of $C(\U_{p})\rtimes_{\beta}^{\piso} \N$ is irreducible. The representation
$\pi:C(\U_{p})\rightarrow B(\ell^{2}(\N))$ is defined by
$$(\pi(h)\xi)(s)=\varepsilon_{u}(\beta_{s}(h))\xi(s)=\beta_{s}(h)(u)\xi(s)=h(q^{-s}u)\xi(s)$$
for every $s\in\N$, $h\in C(\U_{p})$, and $\xi\in\ell^{2}(\N)$, which is precisely the representation $\pi_{u}$, and $W=T^{*}$.
Simply speaking, the irreducible representations $\pi_{u}\rtimes T^{*}$ are derived from the (essential) ideal
$\K(\ell^{2}(\N))\otimes C(\U_{p})$ of $C(\U_{p})\rtimes_{\beta}^{\piso} \N$. Finally, the irreducible representations
$\pi_{\clu}\rtimes^{\piso} \lambda$ are indeed derived from the quotient algebra
$$C(\U_{p})\rtimes_{\beta}^{\piso} \N/(\K(\ell^{2}(\N))\otimes C(\U_{p}))\simeq C(\U_{p})\rtimes_{\beta} \Z.$$
More precisely, since $\Z$ acts on $\U_{p}$ freely, it is known that the evaluation map $\varepsilon_{u}:C(\U_{p})\rightarrow \C$ induces a
nondegenerate representation $\pi:C(\U_{p})\rightarrow B(\ell^{2}(\Z))$ defined by
$$(\pi(h)\xi)(s)=\varepsilon_{u}(\beta_{-s}(h))\xi(s)=\beta_{-s}(h)(u)\xi(s)=h(q^{s}u)\xi(s)$$
for every $s\in\Z$, $h\in C(\U_{p})$, and $\xi\in\ell^{2}(\Z)$, which is exactly the representation $\pi_{\clu}$.
It along with the left regular representation $\lambda:\Z\rightarrow B(\ell^{2}(\Z))$ forms a covariant pair of the
group dynamical system $(C(\U_{p}),\Z,\beta)$ such that the corresponding (unital) representation $\pi_{\clu}\rtimes \lambda$ of $C(\U_{p})\rtimes_{\beta} \Z$ is irreducible. Now, $\pi_{\clu}\rtimes^{\piso} \lambda=(\pi_{\clu}\rtimes \lambda)\circ \phi$, where $\phi$
denotes the natural surjection of $C(\U_{p})\rtimes_{\beta}^{\piso} \N$ onto $C(\U_{p})\rtimes_{\beta} \Z$. Thus,
$\pi_{\clu}\rtimes^{\piso} \lambda$ is irreducible (see \cite[Remark 3.12]{LZ}).
\end{proof}

\begin{lemma}
\label{res-rep}
The restrictions of the representations $$\Pi_{(n,u)}, \Pi_{(n,\widehat{u})}, \Pi_{u}\ \textrm{and}\ \Pi_{\clu}$$ of
$(C(\Z_{p})\rtimes_{\alpha}^{\piso} \N^{2},i)$ to the ideal $C_{0}(\Z_{p}\setminus\{0\})\rtimes_{\alpha}^{\piso} \N^{2}\simeq \A\otimes C(\U_{p})\rtimes_{\beta}^{\piso} \N$
are the tensor product representations
\begin{itemize}
\item[(i)] $(\phi_{n}\rtimes V)\otimes (\pi_{u}\rtimes T^{*})$,
\ \\
\item[(ii)] $(\phi_{n}\rtimes V)\otimes (\pi_{\clu}\rtimes^{\piso} \lambda)$,
\ \\
\item[(iii)] $(M\rtimes^{\piso} T)\otimes (\pi_{u}\rtimes T^{*})$ and
\ \\
\item[(iv)] $(M\rtimes^{\piso} T)\otimes (\pi_{\clu}\rtimes^{\piso} \lambda)$,

\end{itemize}
respectively. Therefore, the representations $\Pi_{(n,u)}$, $\Pi_{(n,\widehat{u})}$, $\Pi_{u}$, and $\Pi_{\clu}$ are irreducible.
\end{lemma}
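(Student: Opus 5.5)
The plan is to compare the two representations on generators of the ideal, transported through the isomorphisms used in (\ref{tensor}). Concretely, under the isomorphism of \cite[Lemma 2.4]{LPR}, a point $x=p^{m}v\in\Z_{p}\setminus\{0\}$ (with $m\in\N$, $v\in\U_{p}$) corresponds to $(m,v)\in\N\times\U_{p}$. An elementary tensor $g\otimes h\in\c_{0}\otimes C(\U_{p})$ corresponds to the function $f(p^{m}v)=g(m)h(v)$, extended by $0$ at $0$. The isomorphism
\[
C_{0}(\Z_{p}\setminus\{0\})\rtimes_{\alpha}^{\piso}\N^{2}\simeq(\c_{0}\rtimes_{\tau}^{\piso}\N)\otimes(C(\U_{p})\rtimes_{\beta}^{\piso}\N)
\]
of \cite{SZ3} sends $i_{\N^{2}}(r,s)^{*}i(f)i_{\N^{2}}(r',s')$ to $\big(i_{\N}(r)^{*}i_{\c_{0}}(g)i_{\N}(r')\big)\otimes\big(j_{\N}(s)^{*}j_{C(\U_{p})}(h)j_{\N}(s')\big)$. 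These elements span the ideal, and they can be written as $i_{\N^{2}}(r,0)^{*}i_{\N^{2}}(0,s)^{*}i(f)i_{\N^{2}}(0,s')i_{\N^{2}}(r',0)$, using the fact that $\N^{2}$ acts through two commuting coordinates. So it suffices to check that each $\Pi$ and the corresponding tensor product representation agree on $i(f)$ for $f=g\otimes h$ and on the generators $i_{\N^{2}}(r,0)$ and $i_{\N^{2}}(0,s)$, interpreted as multipliers acting on the ideal.

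\textbf{Step 1: the coefficient algebra.} Take case (i). For $\xi\in\H_{n}\otimes\ell^{2}(\N)$ we get
\[
(\rho(f)\xi)(r,s)=f(p^{n-r}q^{-s}u)\xi(r,s)=g(n-r)\,h(q^{-s}u)\,\xi(r,s).
\]
Here $q^{-s}u\in\U_{p}$, since $q$ is a unit in $\Z_{p}$ ($q\neq p$). This is exactly $(\phi_{n}(g)\otimes\pi_{u}(h))\xi$. Cases (ii)--(iv) are identical, with $p^{r}$ in place of $p^{n-r}$ (matching $M$) and/or $q^{s}$ with $s\in\Z$ (matching $\pi_{\clu}$).

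\textbf{Step 2: the semigroup generators.} In all cases the partial isometry assigned to $(r,s)$ is already an elementary tensor, namely $V_{r}\otimes T^{*}_{s}$, $V_{r}\otimes\lambda_{s}$, $T_{r}\otimes T_{s}^{*}$ or $T_{r}\otimes\lambda_{s}$. Its first factor is the partial isometry used in $\phi_{n}\rtimes V$ or $M\rtimes T$, and its second factor is the one used in $\pi_{u}\rtimes T^{*}$ or $\pi_{\clu}\rtimes\lambda$. Multiplying out, the image of each spanning element under $\Pi$ is the tensor product of the images of the two factors. By linearity and continuity, the restriction of $\Pi$ equals the stated tensor product representation.

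\textbf{Step 3: irreducibility.} By Proposition \ref{pi1&pi2}, both factors in each case are irreducible. For the tensor factor this needs nonzeroness, which is clear: $\phi_{n}$ and $M$ are nondegenerate, and $\pi_{u}$ and $\pi_{\clu}$ are unital. So \cite[Theorem B.45]{RW} makes the tensor product an irreducible representation of the ideal, and in particular it is nonzero. An irreducible representation whose restriction to an ideal is nonzero and irreducible on the same Hilbert space is itself irreducible, because any operator commuting with $\Pi$ commutes with the restriction and is therefore scalar. This gives irreducibility of $\Pi_{(n,u)}$, $\Pi_{(n,\clu)}$, $\Pi_{u}$ and $\Pi_{\clu}$.

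\textbf{Main obstacle.} The main delicacy is Step 1's bookkeeping: one must confirm that the isomorphism of \cite[Lemma 2.4]{LPR} and the tensor decomposition from \cite{SZ3} send $i(f)$ and $i_{\N^{2}}$ to the elementary tensors described above. In particular, the $p$-direction must match $\tau$, the $q$-direction must match $\beta$, and the identification of the multiplier images $\overline{\Pi}(i_{\N^{2}}(r,s))$ must be compatible with the universal pairs of the two factors. I would verify this by checking that both sides satisfy the same covariance relations on the generators, and then appeal to the universal property.
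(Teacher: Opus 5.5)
Your proposal is correct and follows the paper's proof. Both arguments take the spanning elements $i_{\N}(r)^{*}i_{\c_{0}}(g)i_{\N}(r')\otimes j_{\N}(s)^{*}j_{C(\U_{p})}(h)j_{\N}(s')$ through the isomorphism of \cite[Theorem 5.9]{SZ3} and check that $\Pi|_{\E}$ agrees with the tensor product representation on them; you factor $(V_{r}\otimes T_{s}^{*})^{*}(\phi_{n}(g)\otimes\pi_{u}(h))(V_{r'}\otimes T_{s'}^{*})$ at the operator level where the paper evaluates on $e_{r}\otimes e_{s}$. Irreducibility then follows in both from \cite[Theorem B.45]{RW} and the commutant argument, though your Step 3 should read ``a representation whose restriction to an ideal is irreducible on the same Hilbert space is irreducible'' rather than ``an irreducible representation \dots''.
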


\begin{proof}
We only need to compute the restrictions. We only do this for (i) as the rest follow by similar computations. Firstly, by \cite[Theorem 4.8]{SZ3},
the algebra $$C_{0}(\Z_{p}\setminus\{0\})\rtimes_{\alpha}^{\piso} \N^{2}\simeq
(\c_{0}\otimes C(\U_{p}))\rtimes_{\tau\otimes \beta}^{\piso} (\N\times \N)$$ sits in $(C(\Z_{p})\rtimes_{\alpha}^{\piso} \N^{2},i)$ as the
ideal $$\E=\clsp\{i_{\N^{2}}(r,s)^{*}i_{C(\Z_{p})}(f)i_{\N^{2}}(x,y): f\in C_{0}(\Z_{p}\setminus\{0\})\ \textrm{and}\ r,s,x,y\in\N \}.$$
Now, consider the following diagram:
\begin{equation*}
\begin{diagram}\dgARROWLENGTH=2.5\dgARROWLENGTH
\node{(\c_{0}\rtimes_{\tau}^{\textrm{piso}} \N,i)\otimes (C(\U_{p})\rtimes_{\beta}^{\piso} \N,j)}
\arrow{se,l}{\Gamma}\arrow{e,t}{(\phi_{n}\rtimes V)\otimes (\pi_{u}\rtimes T^{*})}
\node{B(\H_{n}\otimes \ell^{2}(\N))}\\
\node{} \node{\E,}\arrow{n,l}{\Pi_{(n,u)}|_{\E}}
\end{diagram}
\end{equation*}
where $\Gamma$ denotes the isomorphism of $(\c_{0}\rtimes_{\tau}^{\textrm{piso}} \N)\otimes (C(\U_{p})\rtimes_{\beta}^{\piso} \N)$ onto
$\E\simeq (\c_{0}\otimes C(\U_{p}))\rtimes_{\tau\otimes \beta}^{\piso} (\N\times \N)$ (see \cite[Theorem 5.9]{SZ3}). It is enough to see that
the above diagram commutes by calculating on the spanning elements. For every $g\in \c_{0}$, $h\in C(\U_{p})$, $m,t,x,y,s\in \N$, and
$0\leq r\leq n$, we
have
\begin{eqnarray*}
\begin{array}{l}
(\Pi_{(n,u)}|_{\E}\circ \Gamma)(i_{\N}(m)^{*}i_{\c_{0}}(g)i_{\N}(x)\otimes j_{\N}(t)^{*}j_{C(\U_{p})}(h)j_{\N}(y))(e_{r}\otimes e_{s})\\
=\Pi_{(n,u)}(i_{\N^{2}}(m,t)^{*}i_{C(\Z_{p})}(g\otimes h)i_{\N^{2}}(x,y))(e_{r}\otimes e_{s})\\
=(P_{n}T_{m}P_{n}\otimes T_{t})\rho(g\otimes h)(P_{n}T_{x}^{*}P_{n}\otimes T_{y}^{*})(e_{r}\otimes e_{s})\ (\textrm{see (i) in Lemma \ref{lem1}})\\
=(P_{n}T_{m}P_{n}\otimes T_{t})\rho(g\otimes h)(e_{r-x}\otimes e_{s-y})\\
=(P_{n}T_{m}P_{n}\otimes T_{t})((g\otimes h)(p^{n-(r-x)}q^{-(s-y)}u)(e_{r-x}\otimes e_{s-y}))\\
=(P_{n}T_{m}P_{n}\otimes T_{t})(g(n-r+x)h(q^{-(s-y)}u)(e_{r-x}\otimes e_{s-y}))\\
=g(n-r+x)h(q^{-(s-y)}u)(P_{n}T_{m}P_{n}\otimes T_{t})(e_{r-x}\otimes e_{s-y})\\
=g(n-r+x)h(q^{-(s-y)}u)(e_{r-x+m}\otimes e_{s-y+t})\\
=g(n-r+x)e_{r-x+m}\otimes h(q^{-(s-y)}u)e_{s-y+t}
\end{array}
\end{eqnarray*}
if $s-y\geq 0$, $r-x\geq 0$, and $r-x+m\leq n$, otherwise, zero.
On the other hand,
\begin{eqnarray*}
\begin{array}{l}
(\phi_{n}\rtimes V)\otimes (\pi_{u}\rtimes T^{*})(i_{\N}(m)^{*}i_{\c_{0}}(g)i_{\N}(x)\otimes j_{\N}(t)^{*}j_{C(\U_{p})}(h)j_{\N}(y))
(e_{r}\otimes e_{s})\\
=((\phi_{n}\rtimes V)(i_{\N}(m)^{*}i_{\c_{0}}(g)i_{\N}(x))\otimes (\pi_{u}\rtimes T^{*})(j_{\N}(t)^{*}j_{C(\U_{p})}(h)j_{\N}(y))
(e_{r}\otimes e_{s})\\
=((P_{n}T_{m}P_{n})\phi_{n}(g)(P_{n}T_{x}^{*}P_{n})\otimes T_{t}\pi_{u}(h)T_{y}^{*})(e_{r}\otimes e_{s})\ (\textrm{see Proposition \ref{pi1&pi2}})\\
=(P_{n}T_{m}P_{n})\phi_{n}(g)(P_{n}T_{x}^{*}P_{n})e_{r}\otimes T_{t}\pi_{u}(h)T_{y}^{*}e_{s},
\end{array}
\end{eqnarray*}
which, by similar calculations to the above, equals to
$$g(n-r+x)e_{r-x+m}\otimes h(q^{-(s-y)}u)e_{s-y+t}$$
if $s-y\geq 0$, $r-x\geq 0$, and $r-x+m\leq n$, otherwise, zero. Thus,
$$\Pi_{(n,u)}|_{\E}\circ \Gamma=(\phi_{n}\rtimes V)\otimes (\pi_{u}\rtimes T^{*}).$$
\end{proof}
Therefore, the primitive ideals derived from the ideal $C_{0}(\Z_{p}\setminus\{0\})\rtimes_{\alpha}^{\piso}\N^{2}$ are indeed the
kernels of the irreducible representations $\Pi_{(n,u)}$, $\Pi_{(n,\widehat{u})}$, $\Pi_{u}$, and $\Pi_{\clu}$, which we denote by
$\I_{(n,u)}$, $\I_{(n,\clu)}$, $\I_{u}$, and $\I_{\clu}$, respectively.

For the primitive ideals lifted from the closed set $\Prim(\T(\Z^{2}))$, the composition of the surjection $\varphi$ in (\ref{ext-seq})
with the irreducible representations of $\T(\Z^{2})$ gives the irreducible representations of $C(\Z_{p})\rtimes_{\alpha}^{\piso}\N^{2}$
derived from $\T(\Z^{2})$. Thus, we have:
\begin{lemma}
\label{lem2}
Let $T:\N\rightarrow B(\ell^{2}(\N))$ be the isometric representation as in Lemma \ref{lem1}, and $z,w\in \TT$. The irreducible representations of $(C(\Z_{p})\rtimes_{\alpha}^{\piso}\N^{2},i)$ coming from $\T(\Z^{2})$ are given by the following maps:
\begin{itemize}
\item[(a)] $\Pi_{0}:C(\Z_{p})\rtimes_{\alpha}^{\piso} \N^{2}\rightarrow B(\ell^{2}(\N)\otimes \ell^{2}(\N))$
such that
$$\Pi_{0}(i_{C(\Z_{p})}(f)i_{\N^{2}}(m,n))=f(0)(T_{m}^{*}\otimes T_{n}^{*});$$

\item[(b)] $\dot{\Pi}_{z}:C(\Z_{p})\rtimes_{\alpha}^{\piso} \N^{2}\rightarrow B(\ell^{2}(\N))$
such that
$$\dot{\Pi}_{z}(i_{C(\Z_{p})}(f)i_{\N^{2}}(m,n))=f(0)\overline{z}^{m}T_{n}^{*};$$

\item[(c)] $\ddot{\Pi}_{z}:C(\Z_{p})\rtimes_{\alpha}^{\piso} \N^{2}\rightarrow B(\ell^{2}(\N))$
such that
$$\ddot{\Pi}_{z}(i_{C(\Z_{p})}(f)i_{\N^{2}}(m,n))=f(0)\overline{z}^{n}T_{m}^{*};$$
and
\item[(d)] $\Pi_{(z,w)}:C(\Z_{p})\rtimes_{\alpha}^{\piso} \N^{2}\rightarrow \C\simeq B(\C)$
such that
$$\Pi_{(z,w)}(i_{C(\Z_{p})}(f)i_{\N^{2}}(m,n))=f(0)\overline{z}^{m}\overline{w}^{n}$$
\end{itemize}
for all $f\in C(\Z_{p})$ and $m,n\in \N$.
\end{lemma}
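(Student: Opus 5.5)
The plan is to realise each of the four maps as $\sigma\circ\varphi$, where $\varphi$ is the quotient map in (\ref{ext-seq}) and $\sigma$ is an irreducible representation of $\T(\Z^{2})\simeq \C\rtimes_{\id}^{\piso}\N^{2}$. We then check that every irreducible representation vanishing on $C_{0}(\Z_{p}\setminus\{0\})\rtimes_{\alpha}^{\piso}\N^{2}$ arises this way, up to equivalence.

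\emph{Step 1: identify $\varphi$ on generators.} The quotient $C(\Z_{p})/C_{0}(\Z_{p}\setminus\{0\})$ is identified with $\C$ through evaluation at $0$. The induced action $\tilde{\alpha}$ is the identity: for $f\in C(\Z_{p})$ we have $\alpha_{(m,n)}(f)(0)=f(0)$, since $0\in p^{m}q^{n}\Z_{p}$. Hence $\varphi(i_{C(\Z_{p})}(f)i_{\N^{2}}(m,n))=f(0)\,j_{\N^{2}}(m,n)$, where $(j_{\C},j_{\N^{2}})$ is the universal pair for $(\C,\N^{2},\id)$. A partial-isometric Nica-covariant representation of $(\C,\N^{2},\id)$ is just a Nica-covariant representation $W$ of $\N^{2}$ by partial isometries with $W_{s}W_{s}^{*}=1$, that is, by coisometries. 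So $\C\rtimes_{\id}^{\piso}\N^{2}$ is universal for Nica-covariant coisometric representations. Taking adjoints, this is $\T(\Z^{2})$, with $W_{(m,n)}^{*}$ playing the role of the isometries.

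\emph{Step 2: list the irreducibles of $\T(\Z^{2})\simeq\T(\Z)\otimes\T(\Z)$.} By the decomposition (\ref{prim3}) together with \cite[Theorem B.45]{RW}, the irreducible representations are tensor products of irreducibles of $\T(\Z)$. By \cite[Example 3.9]{LZ}, the irreducibles of $\T(\Z)$ are the Toeplitz representation, corresponding to the point $0$, and the characters indexed by $z\in\TT$. In the coisometric picture the Toeplitz representation is $s\mapsto T_{s}^{*}$, and the character is $s\mapsto \overline{z}^{s}$ (either convention for $z$ versus $\overline z$ gives the same parameter set $\TT$).

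The four tensor products are then
\begin{itemize}
\item[$\bullet$] $T^{*}\otimes T^{*}$ on $\ell^{2}(\N)\otimes\ell^{2}(\N)$, which gives (a);
\item[$\bullet$] $\overline{z}^{m}\otimes T_{n}^{*}$ on $\C\otimes\ell^{2}(\N)\simeq\ell^{2}(\N)$, which gives (b);
\item[$\bullet$] $T_{m}^{*}\otimes\overline{z}^{n}$, which gives (c);
\item[$\bullet$] $\overline{z}^{m}\overline{w}^{n}$ on $\C$, which gives (d).
\end{itemize}
Composing each with $\varphi$ and using Step 1 produces exactly the formulas $f(0)(\cdots)$ in the statement. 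Nica covariance holds in each case. For instance, $(T_{m}T_{m}^{*}\otimes T_{n}T_{n}^{*})$ factors componentwise, and on each $\N$-factor the initial projections $T_{a}T_{a}^{*}$ and $T_{b}T_{b}^{*}$ commute, with product $T_{\max(a,b)}T_{\max(a,b)}^{*}$.

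\emph{Step 3: irreducibility and exhaustiveness.} Each $\sigma$ is irreducible:
\begin{itemize}
\item[$\bullet$] $T\otimes T$ generates an algebra containing $\K(\ell^{2}(\N))\otimes\K(\ell^{2}(\N))=\K(\ell^{2}(\N^{2}))$;
\item[$\bullet$] the one-dimensional representations are trivially irreducible;
\item[$\bullet$] the mixed ones contain $\K(\ell^{2}(\N))$.
\end{itemize}
Since $\varphi$ is surjective, each $\sigma\circ\varphi$ is irreducible. Conversely, an irreducible representation whose kernel contains $\ker\varphi$ factors through $\varphi$, so its kernel lies in the closed set $\Prim\T(\Z^{2})$ described in (\ref{prim3}); every point of that set is represented by exactly one of (a)--(d).

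The main obstacle is Step 1: one must justify carefully that $\varphi$ sends $i_{C(\Z_p)}(f)$ to $f(0)$ and $i_{\N^{2}}$ to the generating coisometries, matching \cite[Theorem 4.8]{SZ3} and \cite{SZ2}. This should follow from naturality of the quotient map $C(\Z_{p})\rtimes^{\piso}_{\alpha}\N^{2}\to [C(\Z_{p})/C_{0}(\Z_{p}\setminus\{0\})]\rtimes^{\piso}_{\tilde\alpha}\N^{2}$ on spanning elements.
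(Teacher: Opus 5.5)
Your proposal is correct and follows the same route as the paper. The paper also obtains these representations by composing the quotient map $\varphi$ of (\ref{ext-seq}), which it recalls as $\varphi(i_{C(\Z_{p})}(f)i_{\N^{2}}(m,n))=f(0)(T_{m}^{*}\otimes T_{n}^{*})$, with the known irreducible representations of $\T(\Z^{2})\simeq\T(\Z)\otimes\T(\Z)$. The paper skips the details that you supply: evaluation at $0$ making $\tilde\alpha$ trivial, the coisometric picture of $\C\rtimes_{\id}^{\piso}\N^{2}$, and irreducibility and exhaustiveness via \cite[Theorem B.45]{RW}.
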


\begin{proof}
We skip the details of the proof since the irreducible representations of the algebra $\T(\Z^{2})$ are known. However, we should recall that
for the surjection $\varphi$ of $(C(\Z_{p})\rtimes_{\alpha}^{\piso}\N^{2},i)$ onto $\T(\Z^{2})\subset B(\ell^{2}(\N)\otimes \ell^{2}(\N))$
(see (\ref{ext-seq})) we have
$$\varphi(i_{C(\Z_{p})}(f)i_{\N^{2}}(m,n))=f(0)(T_{m}^{*}\otimes T_{n}^{*})$$
for all $f\in C(\Z_{p})$ and $m,n\in \N$.
\end{proof}
Therefore, the kernels of the irreducible representations $\Pi_{0}$, $\dot{\Pi}_{z}$, $\ddot{\Pi}_{z}$ and $\Pi_{(z,w)}$ are the primitive ideals
derived from $\T(\Z^{2})$, which we denote by $\I_{0}$, $\dot{\I}_{z}$, $\ddot{\I}_{z}$ and $\I_{(z,w)}$, respectively.

\section{The hull-kernel closures of the subsets of $\Prim(C(\Z_{p})\rtimes_{\alpha}^{\piso} \N^{2})$}
\label{sec:closures}

To compute the closures of the subsets of $\Prim\big(C(\mathbb{Z}_{p})\rtimes_{\alpha}^{\textrm{piso}} \mathbb{N}^{2}\big)$, we first need
to determine spanning elements for the primitive ideals derived from the ideal $C_{0}(\Z_{p}\setminus\{0\})\rtimes_{\alpha}^{\piso}\N^{2}$.
We will see that, in fact, each of these ideals is a finite sum of smaller ideals, for which, we identify spanning elements (or generators).

\begin{lemma}
\label{ker-CP}
Let $k$ be a positive integer and $(A,\N^{k},\alpha)$ a semigroup dynamical system with extendible endomorphisms. Let $(\pi,V)$ be an
isometric covariant representation of $(A,\N^{k},\alpha)$. Suppose that $\ker \pi$ is an extendible $\alpha$-invariant ideal of $A$, and
$U$ is a unitary representation of $\TT^{k}$ such that $(\pi\rtimes^{\iso} V,U)$ is a covariant representation of the dual system
$(A\rtimes_{\alpha}^{\iso} \N^{k}, \TT^{k}, \hat{\alpha})$. Then,
$$\ker(\pi\rtimes^{\piso} V)=(\ker \pi)\rtimes_{\alpha}^{\piso} \N^{k}+\ker (\sigma),$$
where $\sigma$ is the natural surjection of $(A\rtimes_{\alpha}^{\piso} \N^{k},i)$ onto $(A\rtimes_{\alpha}^{\iso} \N^{k},j)$
such that
$$\ker (\sigma)=\clsp\{i_{\N^{k}}(x)^{*}i_{A}(a)(1-i_{\N^{k}}(s)^{*}i_{\N^{k}}(s))i_{\N^{k}}(y): a\in A,\ x,y,s\in \N^{k}\}
\ \ (\textrm{see \cite{SZ2}}),$$
and
$$(\ker \pi)\rtimes_{\alpha}^{\piso} \N^{k}=\clsp\{i_{\N^{k}}(x)^{*}i_{A}(a)i_{\N^{k}}(y): a\in \ker \pi,\ x,y\in \N^{k}\}.$$

\end{lemma}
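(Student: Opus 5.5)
We prove the two inclusions separately. Throughout, $\pi\rtimes^{\piso} V=(\pi\rtimes^{\iso} V)\circ\sigma$; this holds because the isometric covariant pair $(\pi,V)$ is in particular a Nica-covariant partial-isometric pair, and $\sigma$ is induced by the universal isometric pair.

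\emph{The inclusion ``$\supseteq$''.} First, $\ker\sigma\subseteq\ker(\pi\rtimes^{\piso}V)$ by the factorization above. Second, take a spanning element $i_{\N^{k}}(x)^{*}i_{A}(a)i_{\N^{k}}(y)$ with $a\in\ker\pi$. It is mapped to $V_x^{*}\pi(a)V_y=0$. Hence both summands lie in the kernel. Their sum is closed, since a sum of two closed ideals in a $C^*$-algebra is closed, so the sum is contained in the kernel.

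\emph{The inclusion ``$\subseteq$''.} Put $J:=(\ker\pi)\rtimes_{\alpha}^{\piso}\N^{k}+\ker\sigma$. Since $\ker\sigma\subseteq J$, it suffices to show
$$\sigma(\ker(\pi\rtimes^{\piso}V))\subseteq\sigma(J)=\sigma\big((\ker\pi)\rtimes_{\alpha}^{\piso}\N^{k}\big).$$
Indeed, if $b\in\ker(\pi\rtimes^{\piso}V)$ and $\sigma(b)=\sigma(c)$ with $c\in J$, then $b-c\in\ker\sigma$, so $b\in J$. Now $\sigma(\ker(\pi\rtimes^{\piso}V))$ is exactly $\ker(\pi\rtimes^{\iso}V)$, because $\sigma$ is surjective. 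Also, $\sigma((\ker\pi)\rtimes^{\piso}\N^{k})$ is the closed span of the elements $j(x)^{*}j_A(a)j(y)$ with $a\in\ker\pi$. This is the image of $(\ker\pi)\rtimes^{\iso}\N^{k}$ in $A\rtimes^{\iso}_\alpha\N^k$, which is an ideal because $\ker\pi$ is extendibly $\alpha$-invariant.

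So the statement reduces to the isometric-crossed-product claim
$$\ker(\pi\rtimes^{\iso}V)=\clsp\{j(x)^{*}j_A(a)j(y):a\in\ker\pi\},$$
which we prove by a gauge-invariant uniqueness argument. Let $I$ denote the right-hand side and let $q$ be the quotient map. The quotient $(A\rtimes^{\iso}\N^{k})/I$ should identify with $(A/\ker\pi)\rtimes^{\iso}_{\tilde\alpha}\N^{k}$; this identification is the standard exact sequence for isometric crossed products by extendible invariant ideals, available in the Adji/Larsen framework cited via \cite{SZ2,SZ3}. The representation $\pi\rtimes^{\iso}V$ then factors through $\tilde\pi\rtimes^{\iso}V$, where $\tilde\pi$ is faithful on $A/\ker\pi$.

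\emph{Main step: faithfulness of $\tilde\pi\rtimes^{\iso}V$.} The hypothesis supplies $U$ making $\pi\rtimes^{\iso}V$ equivariant for $\hat\alpha$. Therefore $\tilde\pi\rtimes^{\iso}V$ intertwines the dual actions, and the expectations $E=\int_{\TT^k}\hat\alpha_z\,dz$ are compatible with it. Given a positive $b$ in the kernel, $E(b)$ is also in the kernel. The fixed-point algebra is the inductive limit of the algebras $\clsp\{j(x)^{*}j_A(a)j(x)\}$. On each of these the representation is faithful: if $V_x^{*}\tilde\pi(a)V_x=0$, then $\tilde\pi(\tilde\alpha_x(a))=V_x V_x^{*}\tilde\pi(a)V_xV_x^{*}=0$, and $\tilde\alpha_x$ is injective in the isometric setting after dilating. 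Faithfulness then passes to the inductive limit. Hence $E(b)=0$, and since $E$ is faithful, $b=0$.

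The obstacle is exactly this fixed-point-algebra faithfulness, together with the injectivity of $\tilde\alpha$ on $A/\ker\pi$. The plan there is to use the dilation to the automorphic system over $\Z^k$ and to identify the fixed-point algebra with the direct limit $A_\infty$, on which faithfulness follows from that of $\tilde\pi$.
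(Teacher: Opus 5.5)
Your reduction is the paper's proof. The paper also writes $\pi\rtimes^{\piso}V=(\pi\rtimes^{\iso}V)\circ\sigma$, uses that $\sigma$ maps $(\ker\pi)\rtimes^{\piso}_{\alpha}\N^{k}$ onto $(\ker\pi)\rtimes^{\iso}_{\alpha}\N^{k}$, and lifts along $\sigma$ as you do; its $\eta$ and $\gamma$ are your $c$ and $b-c$. The one difference is the isometric identity $\ker(\pi\rtimes^{\iso}V)=(\ker\pi)\rtimes^{\iso}_{\alpha}\N^{k}$. The paper simply quotes it from \cite[Lemma 4.2]{LPR}. You instead reprove it by passing to $(A/\ker\pi)\rtimes^{\iso}_{\tilde\alpha}\N^{k}$ and using gauge-invariant uniqueness. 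That costs more work, but it shows why the hypothesis on $U$ is there; the paper uses that hypothesis only through the citation.

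In that reproof one step is miscomputed. Covariance gives $\tilde\pi(\tilde\alpha_x(a))=V_x\tilde\pi(a)V_x^{*}$, not $V_xV_x^{*}\tilde\pi(a)V_xV_x^{*}$. The latter equals $\tilde\pi(c)$ with $c=\overline{\tilde\alpha}_x(1)\,a\,\overline{\tilde\alpha}_x(1)$. The correct argument is shorter:
\begin{itemize}
\item If $V_x^{*}\tilde\pi(a)V_x=0$, then $\tilde\pi(c)=0$, so $c=0$ because $\tilde\pi$ is faithful.
\item Since $j(x)=j(x)j(x)^{*}j(x)$ and $j(x)j(x)^{*}$ is the image of $\overline{\tilde\alpha}_x(1)$, you get $j(x)^{*}j_A(a)j(x)=j(x)^{*}j_A(c)j(x)=0$.
\end{itemize}
So the representation is injective on each $B_x=\{j(x)^{*}j_A(a)j(x)\}$ with no appeal to injectivity of $\tilde\alpha_x$ and no dilation. (Injectivity of $\tilde\alpha_x$ does hold, because $\pi(a)=V_x^{*}\pi(\alpha_x(a))V_x$ forces $\ker\alpha_x\subseteq\ker\pi$, but this step does not need it.) For $x\le y$ you have $j(x)^{*}j_A(a)j(x)=j(y)^{*}j_A(\tilde\alpha_{y-x}(a))j(y)$, so $B_x\subseteq B_y$. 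Hence injectivity passes to the closure of $\bigcup_x B_x$, which is the fixed-point algebra, and the rest of your argument goes through.
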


\begin{proof}
The inclusion $(\ker \pi)\rtimes_{\alpha}^{\piso} \N^{k}+\ker (\sigma)\subset \ker(\pi\rtimes^{\piso} V)$ is immediate. For the other inclusion,
first note that $\pi\rtimes^{\piso} V=(\pi\rtimes^{\iso} V)\circ \sigma$, where
$$\ker(\pi\rtimes^{\iso} V)=(\ker \pi)\rtimes_{\alpha}^{\iso} \N^{k}$$
by \cite[Lemma 4.2]{LPR}. Then, the restriction of the surjection $\sigma$ to the ideal
$(\ker \pi)\rtimes_{\alpha}^{\piso} \N^{k}$ gives the natural surjection of $(\ker \pi)\rtimes_{\alpha}^{\piso} \N^{k}$ onto
$(\ker \pi)\rtimes_{\alpha}^{\iso} \N^{k}$, which we denote by $\sigma_{0}$. Now, if $(\pi\rtimes^{\piso} V)(\xi)=0$, it follows that
$\sigma(\xi)\in (\ker \pi)\rtimes_{\alpha}^{\iso} \N^{k}$. Therefore, there is $\eta\in (\ker \pi)\rtimes_{\alpha}^{\piso} \N^{k}$ such
that $\sigma_{0}(\eta)=\sigma(\xi)$. However, $\sigma_{0}(\eta)=\sigma(\eta)$, and hence, $\sigma(\eta)=\sigma(\xi)$, which implies that
$(\xi-\eta)\in \ker(\sigma)$. Thus, there is $\gamma\in \ker(\sigma)$ such that $\xi-\eta=\gamma$, and therefore,
$\xi=\eta+\gamma$ which belongs to $(\ker \pi)\rtimes_{\alpha}^{\piso} \N^{k}+\ker (\sigma)$.
\end{proof}

Note that for convenience in the rest of the present work, we borrow the notation $\overline{q^{\Z}u}$ from \cite{LPR} which denotes the closure of the subset $\{q^{s}u:s\in \Z\}$ of
$\Z_{p}$, where $u\in \Z_{p}$. Moreover, $\overline{q^{\N}u}=\overline{q^{\Z}u}$ by \cite[Lemma 4.3]{LPR}. In particular, if $u\in \U_{p}$, these closures are just the equivalence class
$\clu$ of $u$ in $\U_{p}/\Gamma_{q}$.

\begin{lemma}
\label{res-ker}
Consider the irreducible representations $\pi_{u}\rtimes T^{*}$ and $\pi_{\clu}\rtimes^{\piso} \lambda$ of $(C(\U_{p})\rtimes_{\beta}^{\piso} \N,j)$, where
$u\in\U_{p}$. Let $\varepsilon_{u}$ be the evaluation map defined on $C(\U_{p})$. Then,
$$\ker\pi_{u}=\ker\pi_{\clu}=\{h\in C(\U_{p}): h\equiv 0\ \textrm{on}\ \clu\},$$ which is an extendible $\beta$-invariant
ideal of $C(\U_{p})$. Moreover,
\begin{itemize}
\item[(i)] $\ker (\pi_{u}\rtimes T^{*})=(\ker\pi_{u})\rtimes_{\beta}^{\piso} \N+\K(\ell^{2}(\N))\otimes \ker(\varepsilon_{u})$; and

\item[(ii)] $\ker (\pi_{\clu}\rtimes^{\piso} \lambda)=(\ker\pi_{\clu})\rtimes_{\beta}^{\piso} \N+\K(\ell^{2}(\N))\otimes C(\U_{p})$.
\end{itemize}
Consequently, $\ker (\pi_{u}\rtimes T^{*})\subset \ker (\pi_{\clu}\rtimes^{\piso} \lambda)$ for every $u\in\U_{p}$.
\end{lemma}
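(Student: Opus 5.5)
Compute $\ker\pi_u$ and $\ker\pi_{\clu}$ directly. From the definitions, $h\in\ker\pi_{\clu}$ iff $h(q^{s}u)=0$ for all $s\in\Z$. By continuity this holds iff $h$ vanishes on $\overline{q^{\Z}u}=\clu$. Similarly $h\in\ker\pi_u$ iff $h$ vanishes on $\{q^{-s}u:s\in\N\}$. The closure of this set is $\overline{q^{-\N}u}$, which equals $\clu$ by \cite[Lemma 4.3]{LPR}; the same argument works with $q^{-1}$ in place of $q$, or one can use that $\clu$ is a compact group coset in which $q^{-\N}$ is dense. The ideal $I=\{h: h\equiv0 \text{ on } \clu\}$ is $\beta$-invariant because $\beta_n(h)(x)=h(q^{-n}x)$ and $q^{-n}\clu=\clu$. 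Since $\beta$ acts by automorphisms, extendibility is automatic: $\beta_n(I)=I$ and the extensions are automorphisms of $\M(I)$.

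For (i) and (ii), apply Lemma \ref{ker-CP} with $k=1$ and $A=C(\U_p)$, to pairs that are isometric representations. For (ii), $(\pi_{\clu},\lambda)$ is unitary, hence isometric. The dual action is implemented by $U_z e_s=z^{s}e_s$ on $\ell^2(\Z)$, which gives a covariant pair for $(C(\U_p)\rtimes^{\iso}_\beta\N,\TT,\hat\beta)$. Lemma \ref{ker-CP} then yields $\ker(\pi_{\clu}\rtimes^{\piso}\lambda)=(\ker\pi_{\clu})\rtimes^{\piso}_\beta\N+\ker\sigma$. It remains to identify $\ker\sigma$ with $\K(\ell^2(\N))\otimes C(\U_p)$. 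Because $\beta$ is by automorphisms, the isometric crossed product equals $C(\U_p)\rtimes_\beta\Z$, and by \cite{LZ} the kernel of $C(\U_p)\rtimes^{\piso}_\beta\N\to C(\U_p)\rtimes_\beta\Z$ is the essential ideal $\K(\ell^2(\N))\otimes C(\U_p)$ used in the proof of Proposition \ref{pi1&pi2}.

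For (i), the pair $(\pi_u,T^*)$ consists of coisometries, not isometries, so Lemma \ref{ker-CP} does not apply directly. I would instead argue through the extension
$$0\to\K(\ell^2(\N))\otimes C(\U_p)\to C(\U_p)\rtimes^{\piso}_\beta\N\xrightarrow{\sigma} C(\U_p)\rtimes_\beta\Z\to0.$$

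\emph{Step 1 (ideal part).} On the ideal $\K\otimes C(\U_p)$, the restriction of $\pi_u\rtimes T^*$ is $\id\otimes\varepsilon_u$. This should follow by computing on matrix units $(1-T^*T)$-type elements, in the same way as the computation in Lemma \ref{res-rep}. So the kernel meets the ideal in $\K\otimes\ker\varepsilon_u$.

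\emph{Step 2 (inclusions).} The elements of $(\ker\pi_u)\rtimes^{\piso}_\beta\N$ are killed by direct inspection of the spanning elements. Hence the right-hand side of (i) is contained in the kernel.

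\emph{Step 3 (reverse inclusion).} Take $\xi$ in the kernel. I would show that $\sigma(\xi)$ lies in $(\ker\pi_u)\rtimes_\beta\Z$. One way is to note that $\pi_u\rtimes T^*$ and $\pi_{\clu}\rtimes\lambda\circ\sigma$ have the same kernel modulo compacts. Concretely, $\pi_u(h)$ is compact precisely when $h$ vanishes on the limit points of $\{q^{-s}u\}$, which form all of $\clu$. Hence the Calkin image of $\pi_u\rtimes T^*$ factors through $C(\U_p)\rtimes_\beta\Z$ with kernel $(\ker\pi_{\clu})\rtimes\Z=\ker(\pi_{\clu}\rtimes\lambda)$. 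Lifting as in the proof of Lemma \ref{ker-CP}, pick $\eta\in(\ker\pi_u)\rtimes^{\piso}\N$ with $\sigma(\eta)=\sigma(\xi)$. Then $\xi-\eta\in\K\otimes C(\U_p)$, and it lies in the kernel, so by Step 1 it is in $\K\otimes\ker\varepsilon_u$.

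Step 3 is the main obstacle, specifically justifying that the Calkin-level kernel is exactly $\ker(\pi_{\clu}\rtimes\lambda)$. For this I would compare the compressions to $\ell^2(\{s\ge N\})$ with large $N$, using that the orbit $q^{-s}u$ is dense in $\clu$.

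The final consequence follows from (i), (ii) and the equality $\ker\pi_u=\ker\pi_{\clu}$, together with $\ker\varepsilon_u\subset C(\U_p)$.
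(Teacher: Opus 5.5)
Your computation of $\ker\pi_u=\ker\pi_{\clu}$ matches the paper's, and so does your proof of (ii): Lemma~\ref{ker-CP} with $U_z e_s=z^s e_s$, then $\ker\sigma\simeq\K(\ell^2(\N))\otimes C(\U_p)$. For extendibility you use that $\beta_n$ restricts to an automorphism of the ideal, where the paper cites Larsen's criterion; both are fine.

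For (i) your route is genuinely different. The paper stays inside $C(\U_p)\rtimes^{\piso}_\beta\N$. It lets $L$ be the right-hand side of (i), takes $\psi$ with $\ker\psi=L$ and an approximate unit $(\eta_i)$ of $\K(\ell^2(\N))\otimes C(\U_p)$, and notes $\xi\eta_i\in\K(\ell^2(\N))\otimes\ker\varepsilon_u\subset L$. It then concludes $\psi(\xi)=0$ by asserting that $\psi$ stays nondegenerate on this essential ideal. That argument is shorter. However, essentiality in $C(\U_p)\rtimes^{\piso}_\beta\N$ does not by itself give that nondegeneracy. What is really needed is that the image of $\K(\ell^2(\N))\otimes C(\U_p)$ is essential in the quotient by $L$, and that is exactly the dynamical input your Step~3 isolates. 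Your route passes to $C(\U_p)\rtimes_\beta\Z$, controls the Calkin-level kernel, lifts as in Lemma~\ref{ker-CP}, and finishes with Step~1. It treats (i) and (ii) uniformly and shows where the structure of $\clu$ enters.

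As you suspect, the ``Hence'' in Step~3 still needs an argument. By Step~2 the Calkin-level map kills $(\ker\pi_u)\rtimes_\beta\Z$, so it induces $\theta:C(\clu)\rtimes_\beta\Z\to B(\ell^2(\N))/\K(\ell^2(\N))$, and your lifting step needs $\theta$ to be injective. Faithfulness of $\pi_u$ on $C(\clu)$ modulo compacts does not by itself give this. Compressing to the tails $\ell^2(\{s\ge N\})$ alone also does not see the off-diagonal terms $T_x\pi_u(h)T_y^*$ with $x\neq y$. Either of the following closes the step.

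\emph{Option 1 (simplicity).} $\Z$ acts on the infinite set $\clu=\Gamma_q u$ freely, since $q^n\neq 1$ in $\Z_p$ for $n\neq 0$. It also acts minimally, since every orbit $q^{\Z}v$ is dense in $\Gamma_q v=\clu$. Hence $C(\clu)\rtimes_\beta\Z$ is simple, and $\theta$, being unital and so nonzero, is injective.

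\emph{Option 2 (conditional expectation).} The diagonal expectation $\Delta$ on $B(\ell^2(\N))$ is contractive, maps compacts into compacts, and kills $T_x\pi_u(h)T_y^*$ for $x\neq y$. It fixes the diagonal operator $T_x\pi_u(h)T_x^*$, which differs from $\pi_u(\beta_{-x}(h))$ by a finite-rank operator. Combined with the density of every tail $\{q^{-s}u:s\ge N\}$ in $\clu$, this gives $\|E(y)\|\le\|\theta(y)\|$ for the canonical faithful expectation $E$ of $C(\clu)\rtimes_\beta\Z$. So $\theta(y)=0$ forces $E(y^*y)=0$, and hence $y=0$.
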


\begin{proof}
First of all, by an argument similar to the one given in \cite{LPR}, one can see that $\overline{q^{-\N}u}=\overline{\{q^{-s}u:s\in \N\}}$ equals to
$\overline{q^{\N}u}=\overline{q^{\Z}u}=\clu$. Therefore,
$$\ker\pi_{u}=\{h\in C(\U_{p}): h\equiv 0\ \textrm{on}\ \overline{q^{-\N}u}\}=\{h\in C(\U_{p}): h\equiv 0\ \textrm{on}\ \clu\}$$
and
$$\ker\pi_{\clu}=\{h\in C(\U_{p}): h\equiv 0\ \textrm{on}\ \overline{q^{\Z}u}\}=\{h\in C(\U_{p}): h\equiv 0\ \textrm{on}\ \clu\}.$$
It thus follows that $\ker\pi_{u}=\ker\pi_{\clu}$. Moreover, since $\U_{p}\backslash \clu$ is a finite union of equivalence classes $\widehat{v}$, where $v\in \U_{p}$, and each equivalence class is clearly invariant under the multiplication by the powers of
$q$, by \cite[Theorem 4.3]{Larsen}, $\ker\pi_{\clu}=\ker\pi_{u}$ is an extendible $\beta$-invariant
ideal of $C(\U_{p})$. Therefore, $(\ker\pi_{u})\rtimes_{\beta}^{\piso} \N=(\ker\pi_{\clu})\rtimes_{\beta}^{\piso} \N$ sits in $C(\U_{p})\rtimes_{\beta}^{\piso} \N$ as the ideal
$$\clsp\{j_{\N}(x)^{*}j_{C(\U_{p})}(h)j_{\N}(y): h\in \ker\pi_{\clu}\ \textrm{and}\ x,y\in\N\}.$$

Now, for (i), note that the restriction of $\pi_{u}\rtimes T^{*}$ to the ideal $\K(\ell^{2}(\N))\otimes C(\U_{p})$ of
$C(\U_{p})\rtimes_{\beta}^{\piso} \N$ is the irreducible representation $\id\otimes \varepsilon_{u}$, where
$\ker(\id\otimes \varepsilon_{u})=\K(\ell^{2}(\N))\otimes \ker(\varepsilon_{u})$. Therefore,
$$L:=(\ker\pi_{u})\rtimes_{\beta}^{\piso} \N+\K(\ell^{2}(\N))\otimes \ker(\varepsilon_{u})$$
is an ideal of $C(\U_{p})\rtimes_{\beta}^{\piso} \N$ which is clearly contained in $\ker (\pi_{u}\rtimes T^{*})$. To see the other inclusion,
let $\psi:C(\U_{p})\rtimes_{\beta}^{\piso} \N\rightarrow B(H_{\psi})$ be a nondegenerate representation such that $\ker \psi=L$. Assume that
$\xi\in \ker (\pi_{u}\rtimes T^{*})$ and $\{\eta_{i}\}$ is an approximate unit for the algebra $\K(\ell^{2}(\N))\otimes C(\U_{p})$. Since
\begin{eqnarray*}
\begin{array}{rcl}
\ker (\pi_{u}\rtimes T^{*})\cap \K(\ell^{2}(\N))\otimes C(\U_{p})&=&\ker ((\pi_{u}\rtimes T^{*})|_{\K(\ell^{2}(\N))\otimes C(\U_{p})})\\
&=&\ker(\id\otimes \varepsilon_{u})=\K(\ell^{2}(\N))\otimes \ker(\varepsilon_{u}),
\end{array}
\end{eqnarray*}
$\xi \eta_{i}\in \K(\ell^{2}(\N))\otimes \ker(\varepsilon_{u})$, and hence, $\xi \eta_{i}\in L$ for each $i$. It follows that
\begin{align}
\label{eq8}
0=\psi(\xi \eta_{i})=\psi(\xi) \psi(\eta_{i}).
\end{align}
Since $\K(\ell^{2}(\N))\otimes C(\U_{p})$ is an essential ideal of $C(\U_{p})\rtimes_{\beta}^{\piso} \N$
(see \cite{AZ,SZ}), the restriction of $\psi$ to $\K(\ell^{2}(\N))\otimes C(\U_{p})$ remains nondegenerate. Therefore, in
(\ref{eq8}), the left hand side converges strongly to $0$ while the right hand side to $\psi(\xi)$. Thus, $\psi(\xi)=0$, which implies
that $\xi\in L$, and hence, $\ker (\pi_{u}\rtimes T^{*})\subset L$.

To see (ii), consider the map $U:\TT\rightarrow B(\ell^{2}(\Z))$ defined by $U_{z}(f)(m)=z^{m}f(m)$, which is a unitary representation of $\TT$ on
$\ell^{2}(\Z)$. Now, one can compute on the spanning elements to see that the pair $(\pi_{\clu}\rtimes^{\iso} \lambda,U)$ is a covariant
representation of the dual system $(C(\U_{p})\rtimes_{\beta}^{\iso} \N, \TT, \hat{\beta})$. Therefore, by Lemma \ref{ker-CP},
$$\ker(\pi_{\clu}\rtimes^{\piso} \lambda)=(\ker\pi_{\clu})\rtimes_{\beta}^{\piso} \N+\ker (\sigma),$$
where $\sigma$ is the natural surjection of $C(\U_{p})\rtimes_{\beta}^{\piso} \N$ onto
$C(\U_{p})\rtimes_{\beta}^{\iso} \N\simeq C(\U_{p})\rtimes_{\beta} \Z$. However, see in \cite{AZ} that
$\ker (\sigma)\simeq \K(\ell^{2}(\N))\otimes C(\U_{p})$, and hence, (ii) also holds. Finally, the inclusion
$\ker (\pi_{u}\rtimes T^{*})\subset \ker (\pi_{\clu}\rtimes^{\piso} \lambda)$ is obvious.
\end{proof}

\begin{remark}
\label{res-ker-2}
The spanning elements for $\ker (\phi_{n}\rtimes V)$ can be given by \cite[Proposition 5.7(d)]{AZ2}. This is because, as it was mentioned in the
proof of the Proposition \ref{pi1&pi2}, $\phi_{n}\rtimes V$ is actually the restriction of the irreducible representation $\pi_{n}^{*}$ of $\c\rtimes_{\tau}^{\piso} \N$ studied in \cite{AZ2} to the ideal $\c_{0}\rtimes_{\tau}^{\piso} \N$.
For $\ker (M\rtimes^{\piso} T)$, as it was recalled earlier, it is a full corner of the algebra $\K(\ell^{2}(\N))\otimes \c_{0}$ of
compact operators. More precisely, there is a projection $\mu\in \L(\ell^{2}(\N)\otimes \c_{0})$ such that
$\ker (M\rtimes^{\piso} T)$ is isomorphic to the full corner $\mu(\K(\ell^{2}(\N))\otimes \c_{0})\mu$
(see the proof of the Proposition \ref{pi1&pi2} and \cite{AZ}).
\end{remark}

The following important fact will also be applied:

\begin{theorem}
\label{ess-I-cp}
Let $P$ be the positive cone of an abelian lattice-ordered (discrete) group $G$. Let $(A, P, \alpha)$ be a dynamical system consisting of a
$C^{*}$-algebra $A$ and an action $\alpha$ of $P$ on $A$ by extendible endomorphisms. If $I$ is an essential $\alpha$-invariant
extendible ideal of $A$, then $I\rtimes_{\alpha}^{\piso} P$ is an essential ideal of $A\rtimes_{\alpha}^{\piso} P$.
\end{theorem}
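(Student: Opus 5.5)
To prove Theorem~\ref{ess-I-cp}, I would realize $A\rtimes_{\alpha}^{\piso} P$ as a full corner of a classical group crossed product by $G$, as in \cite{SZ2}, and reduce essentiality to the group case. Concretely, by \cite{SZ2} there is a $C^{*}$-algebra $B$ (built from $A$, essentially $(B_{P}\otimes A_{\infty})$-type dilation, with $B_{P}$ the Toeplitz-type algebra of $P$), carrying an action $\gamma$ of $G$ by automorphisms, and a projection $\Theta$ with $A\rtimes_{\alpha}^{\piso} P\simeq \Theta(B\rtimes_{\gamma} G)\Theta$ full. The ideal $I$ gives the analogous algebra $B_{I}\subset B$, a $\gamma$-invariant ideal, and naturality of the construction identifies $I\rtimes_{\alpha}^{\piso} P$ with $\Theta(B_{I}\rtimes_{\gamma} G)\Theta$.

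The key steps, in order, are as follows.

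First, I would check that $B_{I}$ is essential in $B$ whenever $I$ is essential in $A$. Here $B$ is an inductive limit, or a span of elements $f\otimes a$ where $f$ runs over functions on $G$ and $a\in A$. If $b\in B$ annihilates $B_{I}$, then slicing against rank-one pieces reduces the question to the statement that $aI=0$ forces $a=0$. Extendibility of $\alpha$ on $I$ is used to ensure that the inductive-limit maps carry $I$ into essential ideals at each stage.

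Second, I would invoke the standard fact that for $G$ discrete abelian (hence amenable), an essential $\gamma$-invariant ideal $J\subset B$ gives an essential ideal $J\rtimes G\subset B\rtimes G$. The proof uses the faithful conditional expectation $E$ onto $B$. If $x\,(J\rtimes G)=0$, then $E(x^{*}x\,j)=E(x^{*}x)j=0$ for $j\in J$, after moving $j$ inside. Hence $E(x^{*}x)J=0$, so $E(x^{*}x)=0$, and therefore $x=0$.

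Third, I would pass to the corner. If $\Theta C\Theta$ is a full corner and $K$ is an essential ideal of $C$, then $\Theta K\Theta$ is essential in $\Theta C\Theta$. To see this, suppose $y\in\Theta C\Theta$ satisfies $y\,\Theta K\Theta=0$. Then $yK\Theta=0$, so $yKC\Theta=0$. Fullness gives $\overline{C\Theta C}=C$, so $yK=yKC\Theta C=0$ (using $K$ an ideal and $K=\overline{KC}$), whence $y=0$.

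An alternative, more intrinsic route avoids dilations. I would take $\xi\in A\rtimes^{\piso}_\alpha P$ with $\xi\,(I\rtimes^{\piso}_\alpha P)=0$ and use the faithful expectation onto the fixed-point algebra of the dual $\widehat{G}$-action. That fixed-point algebra is spanned by $i_{P}(s)^{*}i_{A}(a)i_{P}(s)$, together with a description of it as an inductive limit of algebras $\bigoplus$-decomposed via the projections $1-i_P(s)^*i_P(s)$.

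The main obstacle is the first step, or in the intrinsic approach the analysis of the fixed-point algebra. One must show that essentiality of $I$ in $A$ propagates to the dilated algebra $B$, or to the core, given that the endomorphisms are non-injective and non-unital. I would first try to handle this via an approximate unit $\{e_{\lambda}\}$ of $I$. Extendibility gives $\overline{\alpha}_{s}$ with $\overline{\alpha}_{s}(e_{\lambda})\to \alpha_{s}(1)$ strictly. Then I would show that the images $i_{P}(s)^{*}i_{A}(e_{\lambda})i_{P}(s)$ converge strictly to $i_{P}(s)^{*}i_{P}(s)$, which says that $I\rtimes^{\piso}P$ has an approximate unit converging strictly to $1$ on each spectral subspace in the appropriate sense, and deduce essentiality from this.
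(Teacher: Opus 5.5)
Your main line is the paper's proof. You dilate via \cite{SZ2} to $J\subset B\subset\ell^{\infty}(G,A)$, get $J$ essential in $B$ by evaluating $f\varphi_{s}(i)$ at the coordinate $s$ to obtain $f(s)I=0$ (this is your ``slicing''), lift to $J\rtimes_{\gamma}G\subset B\rtimes_{\gamma}G$, and pass to the full corner. For the lifting step the paper cites \cite[Proposition 2.4]{Kusuda} where you argue with the faithful conditional expectation, which is valid since $G$ is amenable; the corner step the paper leaves implicit, and you supply it correctly.

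Discard the approximate-unit fallback in your last paragraph, however. At $s=0$ it claims an approximate unit of $I$ converges strictly to $1$, which in $\M(A)$ would force $I=A$ and in $\M(I)$ says nothing about $A$. In any case, neither extendibility nor an inductive-limit argument is needed for the first step, since the coordinate evaluation settles it directly.
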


\begin{proof}
Since it is known by \cite[Theorem 4.8]{SZ3} that the crossed product $I\rtimes_{\alpha}^{\piso} P$ sits in $A\rtimes_{\alpha}^{\piso} P$ as
an ideal, we only show that it is an essential ideal. Let $(B, G, \gamma)$ and $(J, G, \iota)$ be the group dynamical systems obtained from
the systems $(A, P, \alpha)$ and $(I, P, \alpha)$, respectively, by the dilation process given in \cite{SZ2}. So, by \cite[Theorem 4.1]{SZ2},
the algebras $I\rtimes_{\alpha}^{\piso} P$ and $A\rtimes_{\alpha}^{\piso} P$ are full corners in the group crossed products
$J\rtimes_{\iota} G$ and $B\rtimes_{\gamma} G$, respectively. However, one can see that the algebra $J$ is an ideal of the algebra
$B\subset \ell^{\infty}(G,A)$, and the action $\iota$ is indeed the restriction of the action $\gamma$ to the ideal $J$ induced by the shift on $\ell^{\infty}(G,A)$. Therefore, the ideal $J$ is invariant under $\gamma$, and hence, $J\rtimes_{\gamma} G$ is actually an ideal of
$B\rtimes_{\gamma} G$. So, to complete the proof, we only need to show that $J\rtimes_{\gamma} G$ is an essential ideal. To do so, by
\cite[Proposition 2.4]{Kusuda}, it is enough to see that $J$ is an essential ideal of $B$. Suppose that $fJ=0$, where $f\in B$. Thus,
$$f\varphi_{s}(i)=0$$
for all $s\in G$ and $i\in I$, where $\varphi_{s}(i)\in \ell^{\infty}(G,I)$ is a spanning element of the ideal $J$ (see \cite[\S3]{SZ2}), such that
\[
\varphi_{s}(i)(t)=
   \begin{cases}
      \alpha_{ts^{-1}}(i) &\textrm{if}\empty\ \text{$s\leq t$},\\
      0 &\textrm{otherwise.}
   \end{cases}
\]
It follows that $f(s)\varphi_{s}(i)(s)=f(s)i=0$ in $A$ for every $i\in I$. Therefore, we must have $f(s)=0$ as $I$ is an essential ideal of $A$.
Since this is true for every $s\in G$, we have $f=0$, which implies that $J$ is an essential ideal of $B$. This completes the proof.
\end{proof}

\begin{cor}
\label{ess-I}
The ideal $C_{0}(\mathbb{Z}_{p}\setminus\{0\})\rtimes_{\alpha}^{\piso} \N^{2}\simeq\A\otimes (C(\U_{p})\rtimes_{\beta}^{\piso} \N)$
of $C(\mathbb{Z}_{p})\rtimes_{\alpha}^{\piso} \N^{2}$ is essential.
\end{cor}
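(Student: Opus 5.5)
The natural route is to apply Theorem \ref{ess-I-cp} with $G=\Z^{2}$, $P=\N^{2}$, $A=C(\Z_{p})$ and $I=C_{0}(\Z_{p}\setminus\{0\})$. Three hypotheses need checking: that the endomorphisms $\alpha_{(m,n)}$ are extendible, that $I$ is an extendible $\alpha$-invariant ideal, and that $I$ is an essential ideal of $C(\Z_{p})$. Once these hold, the theorem gives that $I\rtimes_{\alpha}^{\piso}\N^{2}$ is an essential ideal of $C(\Z_{p})\rtimes_{\alpha}^{\piso}\N^{2}$. Via the isomorphism (\ref{tensor}), this is exactly the ideal $\A\otimes(C(\U_{p})\rtimes_{\beta}^{\piso}\N)$, so the corollary follows.

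\emph{Extendibility and invariance.} Since $C(\Z_{p})$ is unital, every endomorphism of it is automatically extendible. That $C_{0}(\Z_{p}\setminus\{0\})$ is an extendibly $\alpha$-invariant ideal was already used in setting up the exact sequence (\ref{ext-seq}), citing \cite{LPR, SZ3}. For completeness, one can also check it directly. If $f$ vanishes at $0$, then $\alpha_{(m,n)}(f)(0)=f(0)=0$, so $I$ is invariant. For extendibility, take an approximate unit $(e_{i})$ of $I$. Then $\alpha_{(m,n)}(e_{i})$ converges strictly to the characteristic function of the clopen set $p^{m}\Z_{p}$, which is $\alpha_{(m,n)}(1)$ (recall $q$ is a unit in $\Z_{p}$). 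Hence the restriction of $\alpha_{(m,n)}$ to $I$ extends compatibly to $\M(I)$.

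\emph{Essentiality in $C(\Z_{p})$.} This is the only point with any content, and it is elementary. An ideal $C_{0}(U)$ of $C(X)$, with $U$ open, is essential exactly when $U$ is dense in $X$. Here $U=\Z_{p}\setminus\{0\}$, and $0$ is not an isolated point of $\Z_{p}$, since $p^{k}\to 0$ with $p^{k}\neq 0$. So $U$ is dense. Concretely, suppose $f\in C(\Z_{p})$ satisfies $fg=0$ for all $g\in I$. Then $f$ vanishes on $\Z_{p}\setminus\{0\}$, and by continuity $f(0)=\lim_{k}f(p^{k})=0$, so $f=0$.

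I do not expect any real obstacle. The only care needed is to confirm that the identification of $I\rtimes_{\alpha}^{\piso}\N^{2}$ with the ideal $\E$ of Lemma \ref{res-rep} (through \cite[Theorem 4.8]{SZ3}) is the same embedding used in Theorem \ref{ess-I-cp}, so that essentiality transfers along the isomorphism (\ref{tensor}).
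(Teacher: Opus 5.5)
Your proposal is correct and follows the same route as the paper. The paper likewise observes that $\Z_{p}\setminus\{0\}$ is dense in $\Z_{p}$, so $C_{0}(\Z_{p}\setminus\{0\})$ is essential in $C(\Z_{p})$, and then applies Theorem \ref{ess-I-cp}; your explicit checks of extendibility and invariance (which the paper inherits from the setup of (\ref{ext-seq})) and your elementwise proof of essentiality just fill in details the paper leaves implicit.
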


\begin{proof}
Since $\Prim C_{0}(\mathbb{Z}_{p}\setminus\{0\})\simeq \mathbb{Z}_{p}\setminus\{0\}\simeq \N \times \U_{p}$ is dense in
$\Prim C(\mathbb{Z}_{p})\simeq \mathbb{Z}_{p}$, $C_{0}(\mathbb{Z}_{p}\setminus\{0\})$ is essential ideal of $C(\mathbb{Z}_{p})$. Thus,
$C_{0}(\mathbb{Z}_{p}\setminus\{0\})\rtimes_{\alpha}^{\piso} \N^{2}$ is an essential ideal by the Theorem \ref{ess-I-cp}.
\end{proof}

\begin{remark}
\label{rmk2}
Recall that by \cite[Lemma 3.8]{SZ4}, the crossed product
$(C(\mathbb{Z}_{p})\rtimes_{\alpha}^{\piso} \mathbb{N}^{2},i)$ contains two essential ideal $\E_{1}$ and $\E_{2}$
corresponding to the generators of the group $\Z^{2}$ such that $\E_{1}+\E_{2}$ is the kernel of the natural surjection $\Phi$ of
$C(\mathbb{Z}_{p})\rtimes_{\alpha}^{\piso} \mathbb{N}^{2}$ onto the isometric crossed product
$C(\mathbb{Z}_{p})\rtimes_{\alpha}^{\iso} \mathbb{N}^{2}$ of the system $(C(\mathbb{Z}_p),\mathbb{N}^{2},\alpha)$. We have

$$\E_{1}=\clsp\big\{\eta_{(r,s)}^{(x,y)}(f): r,s,x,y\in \N, f\in C(\mathbb{Z}_{p})\big\}$$
and
$$\E_{2}=\clsp\big\{\xi_{(r,s)}^{(x,y)}(f): r,s,x,y\in \N, f\in C(\mathbb{Z}_{p})\big\},$$
where
$$\eta_{(r,s)}^{(x,y)}(f)=i_{\N^{2}}(r,s)^{*} i_{C(\mathbb{Z}_{p})}(f) [1-i_{\N^{2}}(0,1)^{*}i_{\N^{2}}(0,1)]i_{\N^{2}}(x,y)$$
and
$$\xi_{(r,s)}^{(x,y)}(f)=i_{\N^{2}}(r,s)^{*} i_{C(\mathbb{Z}_{p})}(f) [1-i_{\N^{2}}(1,0)^{*}i_{\N^{2}}(1,0)]i_{\N^{2}}(x,y).$$
Moreover, by \cite[Theorem 3.10]{SZ4}, the essential ideal $\E_{1}\cap\E_{2}$ is isomorphic to a full corner
$Q \K(\ell^{2}(\N^{2})\otimes C(\Z_{p})) Q$ in the algebra of compact operators
$\K(\ell^{2}(\N^{2})\otimes C(\Z_{p}))\simeq \K(\ell^{2}(\N^{2}))\otimes C(\Z_{p})$,
where $Q$ is a projection in $\L(\ell^{2}(\N^{2})\otimes C(\Z_{p}))$.
Therefore, the intersection
\begin{align}
\label{ess-I2}
\E_{3}:=\A\otimes (C(\U_{p})\rtimes_{\beta}^{\piso} \N)\cap Q \K(\ell^{2}(\N^{2})\otimes C(\Z_{p})) Q
\end{align}
gives us a smaller essential ideal of $C(\Z_{p})\rtimes_{\alpha}^{\piso} \N^{2}$. Calculations on the spanning elements,
which we skip here, shows that $\E_{3}$ is, in fact, equal to the ideal
$$\mu(\K(\ell^{2}(\N))\otimes \c_{0})\mu\otimes (\K(\ell^{2}(\N))\otimes C(\U_{p})),$$
where the full corner $\mu(\K(\ell^{2}(\N))\otimes \c_{0})\mu$ and $\K(\ell^{2}(\N))\otimes C(\U_{p})$ are essential ideals in
the algebras $\A= \c_{0}\rtimes_{\tau}^{\textrm{piso}} \N$ and $C(\U_{p})\rtimes_{\beta}^{\piso} \N$, respectively
(see \cite{AZ,SZ}). Moreover,
\begin{align}
\label{ess-I3}
\E_{3} \simeq Q \K(\ell^{2}(\N^{2})\otimes C(\Z_{p}\setminus\{0\})) Q,
\end{align}
which is a full corner in
\begin{eqnarray*}
\begin{array}{rcl}
\K(\ell^{2}(\N^{2}))\otimes C(\Z_{p}\setminus\{0\})&\simeq& \K(\ell^{2}(\N))\otimes \K(\ell^{2}(\N))\otimes (\c_{0}\otimes C(\U_{p}))\\
&\simeq& (\K(\ell^{2}(\N))\otimes \c_{0})\otimes (\K(\ell^{2}(\N))\otimes C(\U_{p})).
\end{array}
\end{eqnarray*}
Note that since $\Prim \E_{3}\simeq \Prim C_{0}(\mathbb{Z}_{p}\setminus\{0\})$ is dense in
$\Prim (Q \K(\ell^{2}(\N^{2})\otimes C(\Z_{p})) Q)\simeq \Prim C(\mathbb{Z}_{p})$, and the latter one, itself, is dense in $\Prim(C(\Z_{p})\rtimes_{\alpha}^{\piso} \N^{2})$, it follows that $\Prim \E_{3}$ is dense in
$\Prim(C(\Z_{p})\rtimes_{\alpha}^{\piso} \N^{2})$. This also shows that $\E_{3}$ is an essential ideal of
$C(\Z_{p})\rtimes_{\alpha}^{\piso} \N^{2}$, which is contained in the ideal $\A\otimes (C(\U_{p})\rtimes_{\beta}^{\piso} \N)$, and therefore,
$\A\otimes (C(\U_{p})\rtimes_{\beta}^{\piso} \N)$ must be essential (see Corollary \ref{ess-I}).
\end{remark}

\begin{remark}
\label{rmk1}
Note that the kernels of the representations $\rho$ of (i) and (ii) in Lemma \ref{lem1} are given by
\begin{itemize}
\item[(a)] $\{f\in C(\Z_{p}): f\equiv 0\ \textrm{on the closure of the elements}\ p^{n-r}q^{-s}u,\ \textrm{where}\
0\leq r\leq n\ \textrm{and}\ s\in\N\}$ and
\vspace{5mm}
\item[(b)] $\{f\in C(\Z_{p}): f\equiv 0\ \textrm{on the closure of the elements}\ p^{n-r}q^{s}u,\ \textrm{where}\
0\leq r\leq n\ \textrm{and}\ s\in\Z\}$,
\end{itemize}
respectively. Both are $\alpha$-invariant ideals of $C(\Z_{p})$, however, not extendible (for example, $p(p^{n}u)=p^{n+1}u\not\in \ker \rho$).
Moreover, since $\overline{q^{-\N}u}=\overline{q^{\Z}u}=\clu$, one can see that the closures of the elements $p^{n-r}q^{-s}u$ and $p^{n-r}q^{s}u$
are the same, which is equal to $$\cup_{r=0}^{n} p^{n-r}\clu,$$
where $p^{n-r}\clu=\{p^{n-r}v: v\in \clu\}$ for every $0\leq r\leq n$. Therefore, the ideals (a) and (b) in the above, namely, the kernels of the representations $\rho$ of (i) and (ii) in Lemma \ref{lem1} are the same, which we denote by $\ker\rho_{(n,\clu)}$ (note that it can also be
denoted by $\ker \rho_{(n,u)}$ which makes sense, too). Consequently,
$$\ker\rho_{(n,\clu)}=\{f\in C(\Z_{p}): f\equiv 0\ \textrm{on}\ \cup_{r=0}^{n} p^{n-r}\clu\}.$$
Now, by calculation on the spanning elements, one can see that
$$\clsp\{i_{\N^{2}}(r,s)^{*} i_{C(\Z_{p})}(f) i_{\N^{2}}(x,y): f\in \ker\rho_{(n,\clu)}\ \textrm{and}\ r,s,x,y\in\N\}$$
is a closed ideal of $(C(\mathbb{Z}_{p})\rtimes_{\alpha}^{\piso} \mathbb{N}^{2},i)$. We denote this ideal by $\J_{(n,\clu)}$,
which is clearly contained in the ideals $\I_{(n,u)}$ and $\I_{(n,\clu)}$, namely, the kernels of the irreducible representations
$\Pi_{(n,u)}$ and $\Pi_{(n,\clu)}$, respectively (see \S3). Note that, if $\clu\in\U_{p}/\Gamma_{q}$ is fixed,
$\ker\rho_{(n+1,\clu)}\subset \ker\rho_{(n,\clu)}$, and hence, $\J_{(n+1,\clu)}\subset \J_{(n,\clu)}$ for all $n\in\N$.

In addition, for every $n\in \N$, let $L_{n}$ be the ideal of $(C(\mathbb{Z}_{p})\rtimes_{\alpha}^{\piso} \mathbb{N}^{2},i)$
generated by the elements $\{i_{\N^{2}}(x,0): x\geq n+1\}$. It is not difficult to see that $L_{n}$ is also contained in the ideals
$\I_{(n,u)}$ and $\I_{(n,\clu)}$ for every $u\in \U_{p}$, and $L_{n+1}\subset L_{n}$ for all $n\in\N$.

\end{remark}

\begin{lemma}
\label{span-el}
For every $n\in\N$ and $u\in\U_{p}$, we have
\begin{itemize}
\item[(i)] $\I_{(n,u)}=\J_{(n,\clu)}+\ker(\phi_{n}\rtimes V)\otimes (C(\U_{p})\rtimes_{\beta}^{\piso} \N)+\A\otimes \ker(\pi_{u}\rtimes T^{*})+L_{n}$,
\vspace{5mm}
\item[(ii)] $\I_{(n,\clu)}=\J_{(n,\clu)}+\ker(\phi_{n}\rtimes V)\otimes (C(\U_{p})\rtimes_{\beta}^{\piso}\N)+\A\otimes\ker(\pi_{\clu}\rtimes^{\piso} \lambda)+\E_{1}+L_{n}$,
\vspace{5mm}
\item[(iii)] $\I_{u}=\E_{2}+\A\otimes \ker(\pi_{u}\rtimes T^{*})$, and
\vspace{5mm}
\item[(iv)] $\I_{\clu}=\ker \Phi+\A\otimes \ker(\pi_{\clu}\rtimes^{\piso} \lambda)$,
\end{itemize}
where $\Phi$ is the natural surjection of $C(\mathbb{Z}_{p})\rtimes_{\alpha}^{\piso} \mathbb{N}^{2}$ onto $C(\mathbb{Z}_{p})\rtimes_{\alpha}^{\iso} \mathbb{N}^{2}$ such that $\ker \Phi=\E_{1}+\E_{2}$ (see Remark \ref{rmk2}). Consequently, $\I_{(n,u)}\subset \I_{(n,\clu)}$ and
$\I_{u}\subset \I_{\clu}$ for every $n\in\N$ and $u\in\U_{p}$.
\end{lemma}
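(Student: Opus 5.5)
For each of (i)--(iv) we show that the right-hand side, call it $R$, is contained in the kernel, and then prove the reverse inclusion by an essential-ideal argument modelled on the proof of Lemma \ref{res-ker}(i).

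\textbf{Step 1: $R\subseteq$ kernel.} Each summand is checked separately.
\begin{itemize}
\item[$\bullet$] $\J_{(n,\clu)}$ and $L_{n}$ lie in $\I_{(n,u)}$ and $\I_{(n,\clu)}$ by Remark \ref{rmk1}.
\item[$\bullet$] The two tensor summands lie in the kernels because, by Lemma \ref{res-rep}, the restriction of each $\Pi$ to $\E\simeq\A\otimes(C(\U_{p})\rtimes_{\beta}^{\piso}\N)$ is $\pi_{1}\otimes\pi_{2}$. Since $\A$ is nuclear, $\ker\pi_{1}\otimes B$ and $\A\otimes\ker\pi_{2}$ are killed by $\pi_{1}\otimes\pi_{2}$.
\item[$\bullet$] For $\E_{1}$ in (ii): $\Pi_{(n,\clu)}$ uses $\lambda$ in the second coordinate, so $1-i(0,1)^{*}i(0,1)\mapsto 1\otimes(1-\lambda_{1}^{*}\lambda_{1})=0$.
\item[$\bullet$] For $\E_{2}$ in (iii): $\Pi_{u}$ uses the isometry $T$ in the first coordinate, so $1-i(1,0)^{*}i(1,0)\mapsto(1-T_{1}^{*}T_{1})\otimes1=0$.
\item[$\bullet$] For (iv): both coordinates are isometric, so $\Pi_{\clu}$ factors through $\Phi$ and $\ker\Phi$ lies in the kernel.
\end{itemize}
Finally, $R$ is an ideal, being a sum of ideals of $C(\Z_{p})\rtimes_{\alpha}^{\piso}\N^{2}$ (the tensor summands are ideals of $\E$, hence of the whole algebra).

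\textbf{Step 2: reverse inclusion inside $\E$.} Let $\I$ denote the relevant kernel. By Lemma \ref{res-rep},
$$\I\cap\E=\ker(\pi_{1}\otimes\pi_{2}).$$
Since $\A$ is nuclear (type I), the kernel of a product of irreducibles in a minimal tensor product is
$$\ker\pi_{1}\otimes B+A\otimes\ker\pi_{2}$$
(the standard description of primitive ideals in \cite[Theorem B.45]{RW}). Hence $\I\cap\E\subseteq R$.

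\textbf{Step 3: from $\I\cap\E$ to $\I$.} This step cannot simply copy Lemma \ref{res-ker}. There, the essential ideal gave $\psi(\xi)=0$ for every representation with kernel $L$, because the restriction of $\psi$ stayed nondegenerate. Here a representation $\psi$ with $\ker\psi=R$ may vanish on $\E$ altogether. We therefore pass to the quotient by $R$ and exploit two facts: the images of $\I$ and $\E$ meet trivially there, and the "tail'' of $\I$ outside $\E$ is controlled through $\varphi$ onto $\T(\Z^{2})$.

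Concretely, let $\xi\in\I$. Then $\varphi(\xi)$ lies in the image under $\varphi$ of the extra summands, which we compute in each case:
\begin{itemize}
\item[$\bullet$] Case (iii): $\varphi(\E_{2})$ is the ideal of $\T(\Z^{2})$ generated by $(1-T_{1}T_{1}^{*})\otimes1$.
\item[$\bullet$] Case (iv): $\varphi(\E_{1}+\E_{2})$ is the commutator ideal of $\T(\Z^{2})$.
\item[$\bullet$] Cases (i)/(ii): $\varphi(L_{n})$ contains $T_{x}^{*}\otimes1$ with $x\ge n+1$, hence all of $\T(\Z^{2})$, because $\Pi_{(n,\cdot)}(i(n+1,0))=0$.
\end{itemize}
So we need $\varphi(\I)\subseteq\varphi(R)$. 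This holds because $\Pi$ restricted to $\E$ is irreducible and nonzero, so $\I$ is determined by $\I\cap\E$ through the essentiality relation $\I=\{\xi:\xi\E\subseteq\I\cap\E\}$. We then show that this annihilator-type set equals $R$: if $\xi\E\subseteq R\cap\E$, write $\xi=\rho+\zeta$ with $\rho\in$ (extra summands) chosen so that $\varphi(\zeta)=0$, i.e.\ $\zeta\in\E$. Then $\zeta\in\I\cap\E\subseteq R$.

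\textbf{Main obstacle.} The hardest step is the surjectivity claim $\varphi(R)\supseteq\varphi(\I)$ in cases (iii) and (iv). It requires identifying the images of the $\E_{i}$ in $\T(\Z^{2})$ and verifying that $\varphi(\I_{u})=\varphi(\E_{2})$, which amounts to computing $\ker$ of the restriction of $\Pi_{u}$ to the $\T(\Z^{2})$-direction (a Toeplitz $\otimes$ Toeplitz computation). Nonzero projections such as $(1-T_{1}T_{1}^{*})\otimes(\ldots)$ must be checked against the second coordinate $T^{*}$.

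\textbf{Consequences.} The inclusions $\I_{(n,u)}\subseteq\I_{(n,\clu)}$ and $\I_{u}\subseteq\I_{\clu}$ follow by comparing the formulas summand by summand, using Lemma \ref{res-ker} and $\E_{2}\subseteq\ker\Phi$.
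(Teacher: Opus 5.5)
Your reduction is sound, and for (i) and (ii) it completes the proof. Once $R\subseteq\I$ and $\I\cap\E\subseteq R$, one has $\I=R$ if and only if $\varphi(\I)\subseteq\varphi(R)$. Since $\varphi(i_{\N^{2}}(n+1,0))=T_{n+1}^{*}\otimes 1$ is a co-isometry, $\varphi(L_{n})=\T(\Z^{2})$, so your choice of $\rho\in L_{n}$ is legitimate. You are also right to distrust the device in the paper's proof, namely that a representation $\psi$ with $\ker\psi=R$ stays nondegenerate on the essential ideal $\E$. Essentiality of $\E$ does not pass to the quotient by $R$, and such a $\psi$ may have a summand vanishing on $\E$. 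In (i) and (ii) this does no harm, but only because $\E+L_{n}$ is the whole algebra. There is also a minor omission in your Step 2. In (iii) and (iv) the summand $\ker(M\rtimes^{\piso}T)\otimes(C(\U_{p})\rtimes_{\beta}^{\piso}\N)$ of $\I\cap\E$ is not listed in $R$, so you need it to lie in $\E_{2}$. It does: $\ker(M\rtimes^{\piso}T)$ is generated by $1-i(1)^{*}i(1)$, which corresponds to $1-i_{\N^{2}}(1,0)^{*}i_{\N^{2}}(1,0)$; the paper records this inclusion in its proof.

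The genuine gap is Step 3 in (iii) and (iv). Because $R\cap\E=\I\cap\E$, the set $\{\xi:\xi\E\subseteq R\cap\E\}$ is always equal to $\I$, so showing it equals $R$ just restates the claim. Your decomposition $\xi=\rho+\zeta$ with $\rho\in R$ and $\varphi(\zeta)=0$ presupposes $\varphi(\xi)\in\varphi(R)$, which is exactly what must be proved. This also cannot come from a Toeplitz computation inside $\T(\Z^{2})$: $\Pi_{u}$ and $\Pi_{\clu}$ do not vanish on $\E=\ker\varphi$, so they do not factor through $\varphi$. What is needed is
$$\varphi(\I_{u})\subseteq\K(\ell^{2}(\N))\otimes\T(\Z)=\varphi(\E_{2})\quad\textrm{and}\quad \varphi(\I_{\clu})\subseteq\K\otimes\T(\Z)+\T(\Z)\otimes\K=\varphi(\ker\Phi).$$
Since $\bigcap_{z}\varphi(\dot{\I}_{z})=\K\otimes\T(\Z)$ and $\bigcap_{z,w}\varphi(\I_{(z,w)})$ is the commutator ideal, this is equivalent to $\I_{u}\subseteq\dot{\I}_{z}$ and $\I_{\clu}\subseteq\I_{(z,w)}$ for all $z,w\in\TT$. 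That is a weak-containment (hull-kernel closure) statement about how $\Pi_{u}$ and $\Pi_{\clu}$ degenerate as the first coordinate tends to infinity. One way to prove it: set $\eta_{N}=N^{-1/2}\sum_{k=M_{N}}^{M_{N}+N-1}z^{k}e_{k}$ with $M_{N}\to\infty$. Uniform continuity of $f$, together with $|p^{k}q^{m}u|_{p}=p^{-k}$ for all $m\in\Z$, gives $\langle\Pi_{u}(b)(\eta_{N}\otimes\zeta),\eta_{N}\otimes\zeta\rangle\to\langle\dot{\Pi}_{z}(b)\zeta,\zeta\rangle$ for $b=i_{\N^{2}}(r,s)^{*}i_{C(\Z_{p})}(f)i_{\N^{2}}(x,y)$. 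Hence $\ker\Pi_{u}\subseteq\ker\dot{\Pi}_{z}$. Putting the vectors $N^{-1/2}\sum_{s=0}^{N-1}w^{s}e_{s}\in\ell^{2}(\Z)$ in the second slot handles $\Pi_{\clu}$ and $\Pi_{(z,w)}$ the same way. Without an argument of this kind, (iii) and (iv) remain unproved, and so does the inclusion $\I_{u}\subseteq\I_{\clu}$ as you derive it. The paper's own proof, which rests on the nondegeneracy claim, does not supply this step either.
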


\begin{proof}
For (i), let
$$\E:=\J_{(n,\clu)}+\ker(\phi_{n}\rtimes V)\otimes (C(\U_{p})\rtimes_{\beta} \N)+\A\otimes \ker(\pi_{u}\rtimes T^{*})+L_{n},$$
which is obviously an ideal of $C(\mathbb{Z}_{p})\rtimes_{\alpha}^{\piso} \mathbb{N}^{2}$.
We only show that $\I_{(n,u)}\subset \E$ as the other inclusion follows immediately.
Let $\psi:C(\mathbb{Z}_{p})\rtimes_{\alpha}^{\piso} \mathbb{N}^{2}\rightarrow B(H_{\psi})$ be a nondegenerate representation
such that $\ker \psi=\E$. Let $\xi\in \I_{(n,u)}=\ker \Pi_{(n,u)}$. Take an approximate unit $\{\xi_{\lambda}\}$ in the ideal
$\A\otimes (C(\U_{p})\rtimes_{\beta} \N)$. We have $\xi \xi_{\lambda}\in \ker \Pi_{(n,u)}\cap (\A\otimes (C(\U_{p})\rtimes_{\beta} \N))$
for each $\lambda$, where
$$\ker \Pi_{(n,u)}\cap (\A\otimes (C(\U_{p})\rtimes_{\beta} \N))=\ker (\Pi_{(n,u)}|_{\A\otimes (C(\U_{p})\rtimes_{\beta} \N)}).$$
Since
$$\Pi_{(n,u)}|_{\A\otimes (C(\U_{p})\rtimes_{\beta} \N)}=(\phi_{n}\rtimes V)\otimes (\pi_{u}\rtimes T^{*})\ (\textrm{see Lemma}\ \ref{res-rep}),$$
it follows that $\xi \xi_{\lambda}$ belongs to
$$\ker(\phi_{n}\rtimes V)\otimes (C(\U_{p})\rtimes_{\beta} \N)+\A\otimes \ker(\pi_{u}\rtimes T^{*}),$$
which is contained in $\E=\ker \psi$. Therefore,
\begin{align}
\label{eq7}
\psi(\xi \xi_{\lambda})=\psi(\xi)\psi(\xi_{\lambda})=0
\end{align}
for each $\lambda$. Now, the restriction of the presentation $\psi$ to $\A\otimes (C(\U_{p})\rtimes_{\beta}^{\piso} \N)$ remains
nondegenerate as $\A\otimes (C(\U_{p})\rtimes_{\beta}^{\piso} \N)$ is an essential (see Corollary \ref{ess-I}). So, the left hand
side of (\ref{eq7}) converges strongly to $\psi(\xi)$ while the right hand side to zero. Thus, we must have $\psi(\xi)=0$ which
implies that $\ker \Pi_{(n,u)}\subset \ker \psi=\E$.

We skip the proofs of (ii)-(iv) as we can act similar to (i). However, it should be mentioned that the ideals $\E_{1}$ and $\E_{2}$ contain
\begin{align}\label{I1}
\A\otimes (\K(\ell^{2}(\N))\otimes C(\U_{p}))
\end{align}
and
\begin{align}\label{I2}
\mu(\K(\ell^{2}(\N))\otimes \c_{0})\mu\otimes C(\U_{p})\rtimes_{\beta} \N=\ker (M\rtimes^{\piso} T)\otimes C(\U_{p})\rtimes_{\beta} \N,
\end{align}
respectively. This is because calculations, which are skipped here, show that each spanning element of
the ideal (\ref{I1}) and the ideal (\ref{I2}) equals to $\eta_{(r,s)}^{(x,y)}(f)\in \E_{1}$ and $\xi_{(r,s)}^{(x,y)}(f)\in \E_{2}$, where
$f=(g\otimes h)\in \c_{0}\otimes C(\U_{p})\simeq C_{0}(\mathbb{Z}_{p}\setminus\{0\})$, respectively.

At last, the inclusions $\I_{(n,u)}\subset \I_{(n,\clu)}$ and $\I_{u}\subset \I_{\clu}$ follow immediately as
$\ker (\pi_{u}\rtimes T^{*})\subset \ker (\pi_{\clu}\rtimes^{\piso} \lambda)$ (see Lemma \ref{res-ker}).
\end{proof}

\begin{theorem}\label{main-TH}
Let $n\in\N$ and $u\in\U_{p}$. The maps
\begin{itemize}
\item[$\bullet$] $(n,u)\mapsto \I_{(n,u)}$,
\item[$\bullet$] $(n,\clu)\mapsto \I_{(n,\clu)}$,
\item[$\bullet$] $u\mapsto \I_{u}$,
\item[$\bullet$] $\clu\mapsto \I_{\clu}$,
\item[$\bullet$] $0\mapsto \I_{0}$,
\item[$\bullet$] $z\mapsto \dot{\I}_{z}$,
\item[$\bullet$] $z\mapsto \ddot{\I}_{z}$, and
\item[$\bullet$] $(z,w)\mapsto \I_{(z,w)}$
\end{itemize}
combine to give a bijection of the disjoint union
\begin{align}
\label{dijU}
(\N\times \U_{p}) \sqcup (\N\times (\U_{p}/\Gamma_{q})) \sqcup \U_{p}\sqcup (\U_{p}/\Gamma_{q})
\sqcup\{0\}\sqcup \dot{\TT}\sqcup\ddot{\TT} \sqcup \TT^{2}
\end{align}

onto $\Prim (C(\Z_{p})\rtimes_{\alpha}^{\piso} \N^{2})$.
For the hull-kernel closure $\overline{F}$ of a subset $F$ of (\ref{dijU}), we have:
\begin{itemize}
\item[(a)] if $F$ is any subset of $\N\times \U_{p}$ consisting of pairs $(n,u)$ with finitely many distinct elements $n$
from $\N$, then $\overline{F}$ is the usual closure of $F$ in
$$\Prim(\A\otimes (C(\U_{p})\rtimes_{\beta}^{\piso} \N))\simeq (\N\cup\{\infty\})\times (\U_{p}\sqcup \U_{p}/\Gamma_{q});$$
\item[(b)] if $F$ is any subset of $\N\times \U_{p}$ consisting of pairs $(n,u)$ with infinitely many distinct elements $n$
from $\N$, then $\overline{F}$ is the usual closure of $F$ in
$\Prim(\A\otimes (C(\U_{p})\rtimes_{\beta}^{\piso} \N))$ together with $\{0\}\sqcup \dot{\TT}\sqcup\ddot{\TT} \sqcup \TT^{2}$;
\medskip
\item[(c)] if $F$ is any subset of $\N\times (\U_{p}/\Gamma_{q})$ consisting of pairs $(n,\clu)$ with finitely many distinct elements $n$
from $\N$, then $\overline{F}=F$;
\medskip
\item[(d)] if $F$ is any subset of $\N\times (\U_{p}/\Gamma_{q})$ consisting of pairs $(n,\clu)$ with infinitely many distinct elements $n$
from $\N$, then $\overline{F}$ is the usual closure of $F$ in
$\Prim(\A\otimes (C(\U_{p})\rtimes_{\beta}^{\piso} \N))$ together with $\ddot{\TT} \sqcup \TT^{2}$;
\medskip
\item[(e)] if $F\subset \U_{p}$, then
$\overline{F}$ is the usual closure of $F$ in $(\U_{p}\sqcup \U_{p}/\Gamma_{q})$ together with $\dot{\TT} \sqcup \TT^{2}$;
\medskip
\item[(f)] if $F\subset \U_{p}/\Gamma_{q}$, then
$\overline{F}=F \sqcup \TT^{2}$; and
\medskip
\item[(g)] if $F\subset(\{0\}\sqcup \dot{\TT}\sqcup\ddot{\TT} \sqcup \TT^{2})$, then $\overline{F}$ is the usual closure of $F$ in
$\{0\}\sqcup \dot{\TT}\sqcup\ddot{\TT} \sqcup \TT^{2}$.
\end{itemize}
\end{theorem}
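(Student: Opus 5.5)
The bijection is handled first; the closures are then obtained by comparing kernels.

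\textbf{Bijection.} For the bijection I would use the standard decomposition of $\Prim$ along the extension (\ref{ext-seq}). The open subset $\{P : P\not\supset \E\}$, where $\E=C_{0}(\Z_{p}\setminus\{0\})\rtimes_{\alpha}^{\piso}\N^{2}$, is homeomorphic to $\Prim\E$ via $P\mapsto P\cap\E$. By (\ref{prim2}) it is parametrized by $(\N\cup\{\infty\})\times(\U_{p}\sqcup\U_{p}/\Gamma_{q})$. Lemma \ref{res-rep} shows that the restrictions of $\Pi_{(n,u)},\Pi_{(n,\clu)},\Pi_{u},\Pi_{\clu}$ to $\E$ are exactly the tensor products of the irreducibles of Proposition \ref{pi1&pi2}. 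Since these exhaust $\Prim\A\times\Prim(C(\U_{p})\rtimes_{\beta}^{\piso}\N)$ by \cite[Theorem B.45]{RW} and \cite[Theorem 3.11]{LZ}, the first four maps are bijections onto this open set. The closed complement is $\Prim\T(\Z^{2})$, and Lemma \ref{lem2} together with (\ref{prim3}) gives the remaining four maps.

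\textbf{Closures.} I would then use the criterion that $I\in\overline{F}$ iff $\bigcap_{J\in F}J\subset I$. Within the open piece, this is the closure in $\Prim\E$, read off from the product topology on $\Prim\A\times\Prim(C(\U_{p})\rtimes^{\piso}_{\beta}\N)$. For $\Prim\A\simeq\N\cup\{\infty\}$ a set of $n$'s accumulates at $\infty$ iff it is infinite. On the second factor \cite[Theorem 3.11]{LZ} applies: $\U_{p}$ carries its usual topology, each $\clu$ lies in the closure of any $u$ in its class, and $\U_{p}/\Gamma_{q}$ is discrete and closed. This explains why in (c) the closure is $F$ itself and why (a) is the usual closure.

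Closure points in the closed piece $\Prim\T(\Z^{2})$ are detected by testing which of $\I_{0},\dot{\I}_{z},\ddot{\I}_{z},\I_{(z,w)}$ contain $\bigcap_{F}\I$. Here I would use the spanning descriptions of Lemma \ref{span-el}. For (f), every $\I_{\clu}$ contains $\ker\Phi=\E_{1}+\E_{2}$. Then $\Pi_{(z,w)}$ kills $\E_{1}+\E_{2}$, since it sends $1-i(1,0)^{*}i(1,0)$ and $1-i(0,1)^{*}i(0,1)$ to $0$, whereas $\Pi_{0},\dot{\Pi}_{z},\ddot{\Pi}_{z}$ do not kill both of them. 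Hence $\overline{F}\cap\Prim\T=\TT^{2}$.

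For (e) the same reasoning with $\E_{2}$ (which uses $1-i(1,0)^{*}i(1,0)$) yields $\dot{\TT}\sqcup\TT^{2}$. Indeed, $\dot\Pi_{z}$ sends $i(1,0)$ to a scalar unitary, so it kills $\E_{2}$, whereas $\ddot\Pi_{z}$ and $\Pi_{0}$ do not kill it. Conversely, $\dot{\I}_{z}\supset\E_{2}+\A\otimes(\cdot)$, because $\A\otimes(\cdot)\subset\E\subset\ker\varphi$.

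\textbf{Main obstacle: (a)–(d).} This is the step I expect to be hardest. The ideal $L_{n}$, generated by $i(x,0)$ for $x>n$, lies in $\I_{(n,u)}$, and the $L_{n}$ decrease. If only finitely many $n$ occur, then $\bigcap_{F}\I\supset L_{N}$ for $N$ the largest $n$. Every representation in $\Prim\T$ is nonzero on $L_{N}$: under $\varphi$, $i(x,0)$ maps to $T_{x}^{*}\otimes1$, which is nonzero in every quotient, since $i(x,0)$ is sent to an isometry or a unimodular scalar. So no point of $\Prim\T$ is in the closure, which gives (a) and (c).

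If infinitely many $n$ occur, I would show that $\bigcap_{F}\I\subset\ker\varphi$ for the points claimed. The route is to pass to the limit $n\to\infty$ in $\Pi_{(n,u)}$: the compressions $P_{n}T_{r}^{*}P_{n}$ converge strongly to $T_{r}^{*}$, and $\rho(f)$ near the vector $e_{r}$ behaves like $f(p^{n-r}\cdots)\to f(0)$. Hence $\bigoplus_{n\in F}\Pi_{(n,u)}$ weakly contains $\Pi_{0}$ (respectively $\dot\Pi$-type representations), and therefore $\Pi_{0}$ vanishes on the intersection. Since $\I_{0}\subset\dot{\I}_{z},\ddot{\I}_{z},\I_{(z,w)}$ (as $\{0\}$ is dense in $\Prim\T$ by \cite[Theorem 4.2]{SZ5}), (b) follows.

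For (d), $\I_{(n,\clu)}\supset\E_{1}$, so only representations killing $\E_{1}$ can appear; these are $\ddot{\TT}\sqcup\TT^{2}$. The limiting argument using $V\otimes\lambda$ produces $\ddot\Pi_{z}$, whose closure adds $\TT^{2}$.

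\textbf{Part (g).} Finally, (g) holds because $\Prim\T$ is closed and carries the topology of \cite[Theorem 4.2]{SZ5}. The careful verification of the weak-containment limits in (b) and (d) is the delicate point.
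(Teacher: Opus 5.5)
Your bijection argument matches the paper: the open/closed splitting along (\ref{ext-seq}) together with Lemma \ref{res-rep}. So do your treatments of (a), (c), (e) and (g), which use the decreasing ideals $L_{n}$ with the generator $i_{\N^{2}}(N+1,0)$, and the ideals $\E_{1},\E_{2},\ker\Phi$ of Lemma \ref{span-el}.

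\textbf{Different route for (b) and (d).} The paper uses the composition series of \cite[Theorem 3.10]{SZ4}. It reads $\I_{(n,u)}$ as the point $p^{n}u\to 0$ in $\Prim(\E_{1}\cap\E_{2})\simeq\Z_{p}$. It reads $\I_{(n,\clu)}$ as a point of $\Prim(C(\Z_{p})\rtimes_{\ddot\alpha}\Z)$, a quotient of $\Z_{p}\times\TT$ by \cite[Theorem 8.39]{W}. That places the limits inside the structural picture, but it requires matching those parametrizations with the concrete representations $\Pi_{(n,u)}$ and $\Pi_{(n,\clu)}$, which the paper only sketches. You instead extend $\Pi_{(n,u)}$ by zero to $\ell^{2}(\N)\otimes\ell^{2}(\N)$. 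The images of $i_{C(\Z_{p})}(f)$, $i_{\N^{2}}(x,y)$ and $i_{\N^{2}}(x,y)^{*}$ then converge strongly to their images under $\Pi_{0}$. This convergence is uniform in $u$, since $|p^{n-r}q^{-s}u|_{p}=p^{r-n}$, and the extended maps are contractive. So $\Pi_{0}$ vanishes on $\bigcap_{F}\I_{(n,u)}$. This argument is correct and self-contained.

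\textbf{Imprecision in (d).} The strong limit of $\Pi_{(n,\clu)}$ is not $\ddot\Pi_{z}$. It is the reducible representation $f\mapsto f(0)$, $(x,y)\mapsto T_{x}^{*}\otimes\lambda_{y}$ on $\ell^{2}(\N)\otimes\ell^{2}(\Z)$. You need one more step:
\begin{itemize}
\item after Fourier transforming $\lambda$, this representation is a direct integral of the $\ddot\Pi_{z}$, $z\in\TT$;
\item since $z\mapsto\ddot\Pi_{z}(a)$ is norm continuous, its kernel is $\bigcap_{z}\ddot\I_{z}$.
\end{itemize}
Only then do you get $\ddot\TT\subset\overline{F}$.

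\textbf{Gap in (f).} Your argument only excludes $\{0\}$, $\dot\TT$ and $\ddot\TT$. Knowing that $\I_{\clu}$ and $\I_{(z,w)}$ both contain $\ker\Phi$ says nothing about whether $\bigcap_{F}\I_{\clu}\subset\I_{(z,w)}$. So the inclusion $\TT^{2}\subset\overline{F}$ is not proved. What is missing is the upper bound from Lemma \ref{span-el}(iv):
\[
\I_{\clu}=\ker\Phi+\A\otimes\ker(\pi_{\clu}\rtimes^{\piso}\lambda)\subset\ker\Phi+\ker\varphi\subset\I_{(z,w)}.
\]
This is exactly how the paper finishes (f). Similarly, your ``conversely'' in (e) only gives $\dot\I_{z}\supset\E_{2}+\ker\varphi$. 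To conclude $\bigcap_{F}\I_{u}\subset\dot\I_{z}$ you also need $\I_{u}\subset\E_{2}+\ker\varphi$ from Lemma \ref{span-el}(iii), and you should state it explicitly.
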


\begin{proof}
To see (a), first note that $\bigcap_{(n,u)\in F}L_{n}\subset \bigcap_{(n,u)\in F}\I_{(n,u)}$. Moreover, since $L_{n+1}\subset L_{n}$ for
every $n\in \N$ (see Remark \ref{rmk1}), $\bigcap_{(n,u)\in F}L_{n}=L_{m}$, where $m=\max\{n:(n,u)\in F\}$. Now, since the element (generator) $i_{\N^{2}}(m+1,0)\in L_{m}$ does not belong to any ideal
$$\I_{0}, \dot{\I}_{z}, \ddot{\I}_{z},\ \textrm{and}\ \I_{(z,w)}$$
for all $z,w\in \TT$, (a) is valid.

For (b), first of all, since $\Prim(\A\otimes (C(\U_{p})\rtimes_{\beta} \N))$ is homeomorphic to an open (dense) subset of
$\Prim(C(\Z_{p})\rtimes_{\alpha}^{\piso} \N^{2})$, $\overline{F}$ contains the closure of $F$ in
$\Prim(\A\otimes (C(\U_{p})\rtimes_{\beta} \N))$. To find other elements of
$\overline{F}$, we just show that
\begin{equation}
\label{eq10}
\bigcap_{(n,u)\in F}\I_{(n,u)}\subset \A\otimes (C(\U_{p})\rtimes_{\beta} \N),
\end{equation}
which completes the proof of (b). To do so, we extract an infinite (countable) subset $\{n_{k}: k\in\N\}$ of $\N$ from $F$. Then, for
each $n_{k}$, we pick an element $u_{k}\in\U_{p}$ such that $(n_{k},u_{k})$ belongs to $F$. Thus, we obtain an infinite (countable)
subset $E=\{(n_{k},u_{k}): k\in\N\}$ of $F$. Now, we show that
$$\bigcap_{(n_{k},u_{k})\in E}\I_{(n_{k},u_{k})}=\bigcap_{k\in \N}\I_{(n_{k},u_{k})}\subset \A\otimes (C(\U_{p})\rtimes_{\beta} \N),$$
which implies that (\ref{eq10}) is true. So, we apply \cite[Theorem 3.10]{SZ4} to get the composition series of ideals of $C(\Z_{p})\rtimes_{\alpha}^{\piso} \N^{2}$, from which, it follows that each primitive ideal $\I_{(n,u)}$ ($(n,u)\in \N\times \U_{p}$)
is indeed lifted from the ideal $\E_{1}\cap\E_{2}\simeq Q \K(\ell^{2}(\N^{2})\otimes C(\Z_{p})) Q$, and eventually, from the
ideal $\E_{3} \simeq Q \K(\ell^{2}(\N^{2})\otimes C(\Z_{p}\setminus\{0\})) Q$ of $C(\Z_{p})\rtimes_{\alpha}^{\piso} \N^{2}$ (see Remark \ref{rmk2}), where $\Prim (\E_{1}\cap\E_{2})\simeq \Z_{p}$ and $\Prim \E_{3}\simeq \Z_{p}\setminus\{0\}\simeq \N\times \U_{p}$.
Therefore, the ideals $\{\I_{(n_{k},u_{k})}: k\in \N\}$ correspond to the sequence $\{p^{n_{k}}u_{k}:k\in \N\}$ in $\Z_{p}\setminus\{0\}$, which converges to $0\in \Z_{p}$. This implies that $\cap_{k\in \N}\I_{(n_{k},u_{k})}$ must be contained in the primitive ideal of $C(\Z_{p})\rtimes_{\alpha}^{\piso} \N^{2}$ lifted from $\Prim (\E_{1}\cap\E_{2})\simeq \Z_{p}$ corresponds to $0$, which is actually
$\I_{0}=\ker \Pi_{0}=\ker \varphi=\A\otimes (C(\U_{p})\rtimes_{\beta} \N)$ (see (\ref{ext-seq}) and Lemma \ref{lem2}).
This is because $0\in \Z_{p}$ corresponds to the evaluation map $\varepsilon_{0}$ of $C(\Z_{p})$ which induces
the unital representation $\pi:C(\Z_{p})\rightarrow B(\ell^{2}(\N)\otimes \ell^{2}(\N))$
defined by $(\pi(f)\xi)(r,s)=\varepsilon_{0}(f)\xi(r,s)=f(0)\xi(r,s)$. Together with the representation
$W:\N^{2}\rightarrow B(\ell^{2}(\N)\otimes \ell^{2}(\N))$ defined by
$W_{(r,s)}=T_{r}^{*}\otimes T_{s}^{*}$, $\pi$ forms a Nica-covariant partial-isometric representation of the system
$(C(\Z_{p}),\N^{2},\alpha)$ such that the corresponding (unital) representation $\pi\rtimes W$ of $C(\Z_{p})\rtimes_{\alpha}^{\piso} \N^{2}$
is precisely the irreducible representation $\Pi_{0}$.

For (c), an argument similar to (a) shows that $\overline{F}$ is the usual closure of $F$ in
$\Prim(\A\otimes (C(\U_{p})\rtimes_{\beta}^{\piso} \N))$. However, in this case, $F$ must be a finite set as $\U_{p}/\Gamma_{q}$ is
(as a closed subset of $\Prim(C(\U_{p})\rtimes_{\beta}^{\piso} \N)$). Therefore, $F=\bigcup_{(n,\clu)\in F}(\{n\}\times \{\clu\})$, from which, it
follows that
$$\overline{F}=\cup_{(n,\clu)\in F}\overline{\{n\}\times \{\clu\}}=\cup_{(n,\clu)\in F}(\overline{\{n\}}\times \overline{\{\clu\}})
=\cup_{(n,\clu)\in F}(\{n\}\times \{\clu\})=F.$$

For (d), first, obviously, $\E_{1}\subset \bigcap_{(n,\clu)\in F}\I_{(n,\clu)}$, and therefore, $\bigcap_{(n,\clu)\in F}\I_{(n,\clu)}$
is not contained in the primitive ideals $\I_{0}$ and $\dot{\I}_{z}$
since they do not contain $\E_{1}$. However, we claim that $\overline{F}$ contains $\ddot{\TT} \sqcup \TT^{2}$. To see this, note that since
$\U_{p}/\Gamma_{q}$ is a finite set, $F$ contains an infinite (countable) subset $\{(n_{k},\clu): k\in\N\}$, where $\clu$ is fixed.
So, it is enough to see that $\ddot{\TT} \sqcup \TT^{2}$ is contained in the closure of any set $E$ of the form $A\times\{\clu\}$, where
$A$ is any infinite subset of $\N$ and $u$ is any element of $\U_{p}$. For $\TT^{2}$, since the usual closure of $E$ in
$\Prim(\A\otimes (C(\U_{p})\rtimes_{\beta} \N))$, which is $(A\cup\{\infty\})\times \{\clu\}$,
is contained in $\overline{E}$, $\bigcap_{(n,\clu)\in E}\I_{(n,\clu)}$ is contained in the primitive ideal $\I_{\clu}$ (corresponding to the element $(\infty,\clu)$). Therefore, since $\I_{\clu}\subset\I_{(z,w)}$ for all $u\in\U_{p}$ and $z,w\in\TT$, $\bigcap_{(n,\clu)\in E}\I_{(n,\clu)}$
is contained in each ideal $\I_{(z,w)}$. It follows that $\TT^{2}\subset\overline{E}$, and hence, $\TT^{2}\subset\overline{F}$.
Finally, we show that
\begin{equation*}
\bigcap_{(n,\clu)\in E}\I_{(n,\clu)}\subset \ddot{\I}_{z}
\end{equation*}
for all $z\in \TT$. Again, we apply the composition series of ideals of $C(\Z_{p})\rtimes_{\alpha}^{\piso} \N^{2}$ given in \cite[Theorem 3.10]{SZ4},
by which, it follows that the primitive ideals
$\I_{(m,\widehat{v})}$ ($(m,\widehat{v})\in \N\times \U_{p}/\Gamma_{q}$) are lifted from the ideal $\ker \Phi=\E_{1}+\E_{2}$ of
$C(\Z_{p})\rtimes_{\alpha}^{\piso} \N^{2}$ (see Remark \ref{rmk2} again).
More precisely, they are derived from the quotient algebra
$$\ker \Phi/(\E_{1}\cap\E_{2})\simeq \E_{1}/(\E_{1}\cap\E_{2})\oplus \E_{2}/(\E_{1}\cap\E_{2}),$$
and ultimately, from $\E_{2}/(\E_{1}\cap\E_{2})$,
which is Morita-equivalent to the isometric crossed product $C(\Z_{p})\rtimes_{\ddot{\alpha}}^{\iso} \N$, where
$\ddot{\alpha}_{s}=\alpha_{(0,s)}$ for all $s\in\N$. To see this, since the action $\ddot{\alpha}$ is actually given by automorphisms
(multiplications by the powers of $q$ on $\Z_{p}$), $C(\Z_{p})\rtimes_{\ddot{\alpha}}^{\iso} \N\simeq C(\Z_{p})\rtimes_{\ddot{\alpha}} \Z$.
Therefore, by \cite[Theorem 8.39]{W}, $\Prim (C(\Z_{p})\rtimes_{\ddot{\alpha}} \Z)$ is homeomorphic to a quotient of the product space
$\Z_{p}\times \TT$ equipped with the quotient topology. It follows that, while $(p^{m}v,z)\nsim (0,w)$ for all
$(m,v)\in \N\times \U_{p}\simeq \Z_{p}\setminus\{0\}$ and $z,w\in\TT$, since $\Z$ acts on $\Z_{p}\setminus\{0\}$ freely,
$(p^{m}v,z)\sim (p^{r}t,w)$ if and only if $m=r$ and $\widehat{v}=\widehat{t}$ in $\U_{p}/\Gamma_{q}$. Thus, the equivalent class of
each pair $(p^{m}v,z)$ in $\Prim (C(\Z_{p})\rtimes_{\ddot{\alpha}} \Z)$ can be parameterized by the pair $(m,\widehat{v})$ in the product space
$\N\times (\U_{p}/\Gamma_{q})$. Moreover, $(0,z)\sim (0,w)$ if and only if $z=w$, and hence, the equivalent class of
each pair $(0,z)$ can be simply parameterized by $z$ in $\TT$. Consequently, $\Prim (C(\Z_{p})\rtimes_{\ddot{\alpha}} \Z)$ can be identified
with disjoint union
\begin{equation}
\label{eq1}
(\N\times (\U_{p}/\Gamma_{q})) \sqcup \TT
\end{equation}
equipped with the quotient topology. Therefore, (\ref{eq1}) in $\Prim (C(\Z_{p})\rtimes_{\alpha}^{\piso} \N^{2})$ (see (\ref{dijU}))
corresponds to
$$\Prim (C(\Z_{p})\rtimes_{\ddot{\alpha}} \Z)\simeq \Prim(\E_{2}/(\E_{1}\cap\E_{2}))\simeq \Prim(\ker \Phi/\E_{1})$$
which is a closed subset of $\Prim (\ker \Phi)$ (as well as $\Prim(\E_{1}/(\E_{1}\cap\E_{2})$)). This implies that all primitive ideals
lifted from $\Prim (C(\Z_{p})\rtimes_{\ddot{\alpha}} \Z)$ contain the ideal $\E_{1}$, which are the ideals $\I_{(m,\widehat{v})}$ and $\ddot{\I}_{z}$.
Thus, $\TT$ in (\ref{eq1}) corresponds to $\ddot{\TT}$ in $\Prim (C(\Z_{p})\rtimes_{\alpha}^{\piso} \N^{2})$. Now, let
$$\sigma:\Z_{p}\times \TT\rightarrow \Prim (C(\Z_{p})\rtimes_{\ddot{\alpha}} \Z)$$
be the quotient map. The primitive ideals $\{\I_{(n,\clu)}:(n,\clu)\in E\}$ correspond to the sequence
$\{(n,\clu):n\in A\}$ in $\Prim (C(\Z_{p})\rtimes_{\ddot{\alpha}} \Z)$ whose closure, as an infinite set, contains $\TT\simeq \ddot{\TT}$.
This is because $(n,\clu)=\sigma(p^{n}u,z)$ for every $z\in\TT$, and since $A$ is infinite, the sequence $\{(p^{n}u,z):n\in A\}$
converges to $(0,z)$ in the product space $\Z_{p}\times \TT$. Therefore, the sequence $\{(n,\clu):n\in A\}$ converges to $\sigma(0,z)=z$
in $\Prim (C(\Z_{p})\rtimes_{\ddot{\alpha}} \Z)$ as $\sigma$ is continuous. This means that $\{(n,\clu):n\in A\}$ converges to every
$z\in\TT$ (which is not unusual as the primitive ideal spaces of $C^{*}$-algebras are non-Hausdorff in general), and hence, every $z\in\TT$
belongs to the closure of $\{(n,\clu):n\in A\}$. It thus follows that $\bigcap_{(n,\clu)\in E}\I_{(n,\clu)}\subset \ddot{\I}_{z}$ for every
$z\in \TT$. Consequently, $\overline{F}$ is the usual closure of $F$ in
$\Prim(\A\otimes (C(\U_{p})\rtimes_{\beta} \N))$ together with $\ddot{\TT} \sqcup \TT^{2}$.

For (e), since $\E_{2}\subset \I_{u}\subset \A\otimes (C(\U_{p})\rtimes^{\piso}_{\beta} \N)+\E_{2}$ for every
$u\in \U_{p}$, we have
$$\E_{2}\subset \bigcap_{u\in F}\I_{u}\subset \A\otimes (C(\U_{p})\rtimes^{\piso}_{\beta} \N)+\E_{2}.$$
Therefore, since the primitive ideals $\dot{\I}_{z}$ and $\I_{(z,w)}$ contain $\A\otimes (C(\U_{p})\rtimes_{\beta} \N)+\E_{2}$ and $\E_{2}$
is not contained in the primitive ideals $\I_{0}$ and $\ddot{\I}_{z}$, $\overline{F}$ is the usual closure of $F$ in
$\Prim(\A\otimes (C(\U_{p})\rtimes_{\beta} \N))$ together with $\dot{\TT} \sqcup \TT^{2}$. However, $F$ actually corresponds to the set
$\{\infty\}\times F$ in $\Prim(\A\otimes (C(\U_{p})\rtimes_{\beta} \N))$, whose usual closure in
$\Prim(\A\otimes (C(\U_{p})\rtimes_{\beta} \N))$ simply corresponds to the usual closure of $F$ in $\U_{p}\sqcup \U_{p}/\Gamma_{q}$.

For (f), since $\ker\Phi\subset \I_{\clu}\subset \A\otimes (C(\U_{p})\rtimes^{\piso}_{\beta} \N)+\ker\Phi$ for every $u\in \U_{p}$,
$$\ker\Phi\subset \bigcap_{\clu\in F}\I_{\clu}\subset \A\otimes (C(\U_{p})\rtimes^{\piso}_{\beta} \N)+\ker\Phi.$$
Thus, since $\A\otimes (C(\U_{p})\rtimes^{\piso}_{\beta} \N)+\ker\Phi$ is contained in each ideal $\I_{(z,w)}$ and the ideals
$\I_{0}$, $\dot{\I}_{z}$, and $\ddot{\I}_{z}$ do not contain $\ker\Phi$, $\overline{F}$ is the usual closure of $F$ in
$\Prim(\A\otimes (C(\U_{p})\rtimes_{\beta} \N))$ together with $\TT^{2}$. Again, note that the usual
closure of $F$ in $\Prim(\A\otimes (C(\U_{p})\rtimes_{\beta} \N))$ corresponds to the usual closure of $F$ in
$\U_{p}\sqcup \U_{p}/\Gamma_{q}$, which is just $F$.

Finally, (g) follows immediately as $\Prim \T(\Z^{2})=\{0\}\sqcup \dot{\TT}\sqcup\ddot{\TT} \sqcup \TT^{2}$ is (homeomorphic to) a closed subset of
$\Prim(C(\Z_{p})\rtimes_{\alpha}^{\piso} \N^{2})$.

\end{proof}

\begin{remark}
\label{rmk3}
Note that if the subset $F$ of $\Prim(C(\Z_{p})\rtimes_{\alpha}^{\piso} \N^{2})$ is $\N\times \U_{p}$, then the statement (b) of
the Theorem \ref{main-TH} implies that
$$\overline{F}=(\N\cup\{\infty\})\times (\U_{p}\sqcup \U_{p}/\Gamma_{q})\sqcup\{0\}\sqcup \dot{\TT}\sqcup\ddot{\TT} \sqcup \TT^{2},$$
that is, $F$ is dense in $\Prim(C(\Z_{p})\rtimes_{\alpha}^{\piso} \N^{2})$ (recall that $\U_{p}$ is dense in $\Prim(C(\U_{p})\rtimes_{\beta} \N)$
as $\K(\ell^{2}(\N))\otimes C(\U_{p})$ is an essential ideal of $C(\U_{p})\rtimes_{\beta} \N$). This makes sense as $F$ actually corresponds to
$\Prim \E_{3}$, where $\E_{3}$ is an essential ideal of $C(\Z_{p})\rtimes_{\alpha}^{\piso} \N^{2}$ (see Remark \ref{rmk2}).
It follows that
$\bigcap_{(n,u)\in (\N\times \U_{p})}\I_{(n,u)}=\{0\}$,
and therefore,
\begin{equation*}
\bigcap_{n\in \N}L_{n}=\{0\} \
\end{equation*}
as $\bigcap_{n\in \N}L_{n}\subset \bigcap_{(n,u)\in (\N\times \U_{p})}\I_{(n,u)}$.

Also, for the closure $\overline{F}$ of a subset $F$ of $\U_{p}\sqcup \U_{p}/\Gamma_{q}\simeq \Prim(C(\U_{p})\rtimes_{\beta} \N)$, we have
\begin{itemize}
\item[$\bullet$] $\overline{F}=F$ if $F\subset \U_{p}/\Gamma_{q}$ as the finite set $\U_{p}/\Gamma_{q}$ is a closed subset of
$\Prim(C(\U_{p})\rtimes_{\beta} \N)$ equipped with the discrete topology; and
\item[$\bullet$] $\overline{F}=\overline{F}^{\U_{p}}\cup \sigma(\overline{F}^{\U_{p}})$ if $F\subset \U_{p}$, where $\overline{F}^{\U_{p}}$
denotes the usual closure of $F$ in $\U_{p}$ and $\sigma$ the quotient map of $\U_{p}$ onto $\U_{p}/\Gamma_{q}$. This is because
$\ker (\pi_{u}\rtimes T^{*})\subset \ker (\pi_{\clu}\rtimes^{\piso} \lambda)$ for every $u\in\U_{p}$ (see Lemma \ref{res-ker}).

\end{itemize}

\end{remark}


\end{document}